\renewenvironment{proof}[1][Proof]{\textbf{#1.} }
{\ \rule{0.5em}{0.5em}}
\newtheorem{theorem}{Theorem}
\newtheorem{lemma}{Lemma}
\newtheorem{remark}{Remark}
\newtheorem{corollary}{Corollary}
\begin{document}

\title[The dynamics  of a three-dimensional  differential system  \dots]
{On the dynamics of a three-dimensional  differential system
related to the normalized Ricci flow on generalized Wallach spaces}

\author{N.\,A.~Abiev}
\address{N.\,A.~Abiev \newline
Institute of Mathematics NAS KR, Bishkek, prospect Chui, 265a, 720071, Kyrgyz Republic}
\email{abievn@mail.ru}

\begin{abstract} 
We study the behavior of a three-dimensional
dynamical system with respect to some set $S$ given in 3-dimensional euclidian space.
Geometrically such a system  arises from the normalized Ricci flow on some class of generalized Wallach spaces that can be described by a real parameter $a\in (0,1/2)$,
as for~$S$ it  represents the set of invariant Riemannian  metrics of positive sectional curvature on the Wallach spaces. Establishing that  $S$ is bounded by three conic surfaces
and regarding the normalized Ricci flow as an abstract dynamical system
we find out the character of interrelations between
that system and~$S$ for all $a\in (0,1/2)$. These results can cover
some well-known  results, in particular,
they can imply that the normalized Ricci flow evolves all generic invariant Riemannian  metrics with positive sectional curvature into metrics with mixed sectional curvature
on the Wallach spaces  corresponding to the cases $a\in \{1/9, 1/8, 1/6\}$ of generalized Wallach spaces.

\vspace{2mm} \noindent Key words and phrases: Wallach space, generalized Wallach space,
Riemannian metric, normalized Ricci flow, sectional curvature, dynamical system, singular point, phase portrait, invariant set.

\vspace{2mm}

\noindent {\it 2010 Mathematics Subject Classification:} 53C30, 53C44, 37C10, 34C05.
\end{abstract}

\maketitle

\section{Introduction}\label{vvedenie}

The paper is devoted to the study of the dynamics of a system of three nonlinear autonomous ordinary differential equations (given below in~\eqref{three_equat})
relative to some special set  $S\subset \mathbb{R}^3$ (given in~\eqref{DPG}).
A similar system  arises when the evolution of positively curved invariant Riemannian metrics are studying under the influence of the (normalized) Ricci flow on some  homogeneous spaces.
In cases when we are able to describe the set where parameters of such metrics are located,
the problem under consideration can be distracted from its geometric essence
and reformulated as a problem of studying the behavior of an abstract dynamical system's trajectories
with respect to a given abstract set.

The study
 of the normalized Ricci flow equation
\begin{equation}\label{ricciflow}
 \frac{\partial}{\partial t} \bold{g}(t) = -2 \operatorname{Ric}_{\bold{g}}+ 2{\bold{g}(t)}\frac{S_{\bold{g}}}{n}
\end{equation}
 in a Riemannian manifold $\mathcal{M}^n$ was
initiated by R.~Hamilton in~\cite{Ham} and has been attracting an attention during a long time,
where $\bold{g}(t)$ means a $1$-parameter family of Riemannian metrics in~$\mathcal{M}^n$,
$\operatorname{Ric}_{\bold{g}}$ is the Ricci tensor and $S_{\bold{g}}$ is the scalar curvature of  ${\bold{g}}$.

In \cite{AANS1} we started to study~\eqref{ricciflow}
on a class of compact homogeneous spaces  that were introduced and called
{\it three-locally-symmetric}\/ in~\cite{Lomshakov2, Nikonorov2} and   {\it generalized Wallach spaces}\/
in~\cite{Nikonorov1}. A characteristic feature of these spaces is that every generalized Wallach space can be described by a triple of real parameters $a_i\in (0,1/2]$
(for definitions in detail and important properties see~\cite{Nikonorov2, Nikonorov4, Nikonorov1} and references therein).
It should be noted that  classifications of generalized Wallach spaces was recently obtained by Yu.\,G.~Nikonorov in~\cite{Nikonorov4} and independently by Z.~Chen, Y.~Kang and K.~Liang in~\cite{CKL}.

A starting point for our investigations was the possibility of reducing~\eqref{ricciflow} to a system of three autonomous ordinary differential equations (ODEs) with respect to $x_1,x_2,x_3$ due to the fact that generalized Wallach spaces are characterized as compact homogeneous spaces $G/H$ whose isotropy representation
decomposes into a direct sum
$\mathfrak{p}=\mathfrak{p}_1\oplus \mathfrak{p}_2\oplus \mathfrak{p}_3$ of three
$\operatorname{Ad}(H)$-invariant irreducible modules satisfying
$[\mathfrak{p}_i,\mathfrak{p}_i]\subset \mathfrak{h}$ $(i\in\{1,2,3\})$ \cite{Lomshakov2, Nikonorov2}.
For a fixed bi-invariant inner product
$\langle\cdot, \cdot\rangle$ on the Lie algebra $\mathfrak{g}$ of the Lie group $G$,
any $G$-invariant Riemannian metric $\bold{g}$ on $G/H$ is determined by an $\operatorname{Ad} (H)$-invariant inner product
$
(\cdot, \cdot)=\left.x_1\langle\cdot, \cdot\rangle\right|_{\mathfrak{p}_1}+
\left.x_2\langle\cdot, \cdot\rangle\right|_{\mathfrak{p}_2}+
\left.x_3\langle\cdot, \cdot\rangle\right|_{\mathfrak{p}_3}
$,
where $x_1,x_2,x_3$ are positive real numbers
(see~\cite{Lomshakov2, Nikonorov2, Nikonorov4, Nikonorov1} and references therein for more information).
Briefly $\bold{g}$ can be characterized by a triple of positive real numbers $x_1, x_2, x_3$.
In this way
using the expressions
of the Ricci tensor $\operatorname{Ric}_{\bold{g}}$ and
the scalar curvature  $S_{\bold{g}}$ derived in \cite{Nikonorov2} for generalized Wallach spaces the equation~\eqref{ricciflow} was reduced in~\cite{AANS1}
to the  system of ODEs
\begin{equation}\label{three_equatG}
\dot{x}_i(t)= f_i(x_1,x_2,x_3),  \quad i=1,2,3,
\end{equation}
where $x_i=x_i(t)>0$\,  are parameters of the invariant Riemannian metric on a given  generalized  Wallach space and
\begin{eqnarray*}
f_1(x_1,x_2,x_3)&=&-1-a_1x_1 \left( \dfrac {x_1}{x_2x_3}-  \dfrac {x_2}{x_1x_3}- \dfrac {x_3}{x_1x_2} \right)+x_1B,\\
f_2(x_1,x_2,x_3)&=&-1-a_2x_2 \left( \dfrac {x_2}{x_1x_3}- \dfrac {x_3}{x_1x_2} -  \dfrac {x_1}{x_2x_3} \right)+x_2B,\\
f_3(x_1,x_2,x_3)&=&-1-a_3x_3 \left( \dfrac {x_3}{x_1x_2}-  \dfrac {x_1}{x_2x_3}- \dfrac {x_2}{x_1x_3} \right)+x_3B, \\
B&:=&\left( \dfrac {1}{a_1x_1}+\dfrac {1}{a_2x_2}+\dfrac {1}{a_3x_3}- \left( \dfrac {x_1}{x_2x_3}+
\dfrac {x_2}{x_1x_3}+ \dfrac {x_3}{x_1x_2} \right) \right)
\left( \frac{1}{a_1} +\frac{1}{a_2}+ \frac{1}{a_3} \right)^{-1}.
\end{eqnarray*}

Depending on three parameters
$a_1,a_2,a_3$
the qualitative picture of the system~\eqref{three_equatG} is quite varied and complicated.
Taking into account the fact that the volume $\operatorname{Vol}=x_1^{1/a_1}x_2^{1/a_2}x_3^{1/a_3}$ is a first integral of~\eqref{three_equatG} it can be reduced to a planar system
with respect to any two variables, say $x_j(t)$ and~$x_k(t)$ eliminating~$x_i$
on the surface $\operatorname{Vol}\equiv 1$ (the unit volume condition).
For example, eliminating~$x_3$ we obtain
the system of two differential equations
of the type
$\dot{x_1}(t)=\widetilde f_1(x_1,x_2)$, $\dot{x_2}(t)=\widetilde f_2(x_1,x_2)$,
where
$\widetilde f_1(x_1,x_2)\equiv  f_1(x_1,x_2,\varphi(x_1,x_2))$,
$\widetilde f_2(x_1,x_2)\equiv  f_2(x_1,x_2,\varphi(x_1,x_2))$,
$\varphi(x_1,x_2)=x_1^{ -\tfrac {a_3}{a_1}} x_2^{- \tfrac {a_3}{a_2}}$
(note that phase portraits in the coordinate planes
$(x_2, x_3)$ and $(x_1,x_3)$  will be the same due to symmetry in~\eqref{three_equatG}).
Basing on this idea the following results was obtained
on classification of singular points of the system $\dot{x_i}(t)=\widetilde f_i(x_1,x_2)$
in~\cite{Ab_kar3, AANS1, AANS2, AANS3}: there exist at most four distinct singular (equilibria) points for every $(a_1,a_2,a_3)\in (0,1/2]^3$;
non-degenerate singular points are all saddles or nodes
(three saddles and one node or two saddles);
degenerate singular points are all semi-hyperbolic (the nilpotent case never can occur)
excepting an original case $a_1=a_2=a_3=1/4$ with the single linear zero type saddle $(1,1)$
(see~\cite{Dumort, JiangLlibre} for definitions and preliminaries).
In~\cite{Ab2} we established  the topological structure (in the standard topology of $\mathbb{R}^3$) of a special algebraic surface  defined by a symmetric polynomial in variables  $a_1,a_2,a_3$ of degree~$12$  which provides degenerate singular points (a detailed study of that surface was continued
in~\cite{Bat, Bat2}).
In~\cite{Ab1} two-parametric bifurcations of singular points of
$\dot{x_i}(t)=\widetilde f_i(x_1,x_2)$, $i=1,2$, were studied in the case of
two coincided parameters $a_i=a_j$, $i\ne j$.
Similar studies of bifurcations and global dynamics
for three dimensional dynamical systems with three real parameters
can be found in \cite{Llibre, Miy}.
As an example of a multi-parameter bifurcation analysis we refer
readers to the paper~\cite{Tan}, where  a planar system is studied
which depends on five real parameters.

In the sequel we used the system~\eqref{three_equatG} to study the evolution of
positively curved Riemannian metrics on the Wallach spaces and generalized Wallach spaces.
A lot of papers are devoted to homogeneous spaces admitting positive curvature
(sectional  or Ricci curvature).
Results concerning  compact homogeneous spaces  of positive sectional curvature and their classification can be found in the papers  \cite{AW, BB, Be, Wal, Wil, WiZi, XuWolf}.
A wide information concerning  various aspects of this topic can be found in  \cite{Puttmann,Sh2,Valiev, VerZi, Volp1, Volp2, Volp3}.
An interesting class of homogeneous spaces consists of the Wallach spaces
\begin{equation} \label{SWS}
W_6:=\operatorname{SU}(3)/T_{\max},
\ \ W_{12}:=\operatorname{Sp(3)}/\operatorname{Sp(1)}\times \operatorname{Sp(1)}
\times \operatorname{Sp(1)},
\ \ W_{24}:=F_4/\operatorname{Spin(8)}
\end{equation}
that admit invariant Riemannian metrics of positive sectional curvature (see~\cite{Wal} for details).

In general the Ricci flow need not preserve positivity
of curvature.
M.-W.~Cheung and N.\,R.~Wallach
proved in \cite{ChWal} (see~Theorem~2 therein)
that the (normalized) Ricci flow deforms certain
metrics of positive sectional curvature into metrics with mixed sectional  curvature for each homogeneous Riemannian manifold in~\eqref{SWS}.
C.~B\"ohm and B.~Wilking
showed that even the positivity
of the Ricci curvature may not be preserved
for metrics with positive sectional curvature.
They proved that on $W_{12}$
the (normalized) Ricci flow evolves some  metrics of positive sectional curvature into metrics with mixed Ricci curvature (see Theorem~3.1 in \cite{Bo}).
The same assertion was proved for $W_{24}$ in Theorem 3 of~\cite{ChWal}.
The above mentioned results of \cite{Bo, ChWal}
concerning sectional curvature
as well as some of their results relating to evolution of
metrics with positive Ricci curvature were generalized
in~\cite{AN}.
In particular,  we proved  that  on each Wallach space~\eqref{SWS}
the normalized Ricci flow evolves all generic metrics  with positive sectional curvature into metrics with mixed sectional curvature
(see Theorem~1  in~\cite{AN});  on $W_{12}$ and $W_{24}$ the normalized Ricci flow evolves all generic metrics  with positive Ricci curvature into metrics with mixed Ricci curvature  (see Theorem~2 in~\cite{AN}).

By generic metrics we mean metrics with $x_i \ne x_j\ne x_k\ne x_i$, where $i,j,k\in \{1,2,3\}$.
Metrics with two coincided parameters $x_i=x_j$, $i\ne j$,  are called
exceptional and are not of interest to consider.
A geometric explanation of this peculiarity
can be found in \cite{Nikonorov4}. We explain it from the point of view
of dynamical systems in Lemma~\ref{I_i_curves} establishing that
metrics with  $x_i=x_j$ represent invariant sets
of the normalized Ricci flow and, therefore, can admit the primitive evolution.

The main observation that the authors of~\cite{AN}  based on is that each Wallach space
in~\eqref{SWS} can be obtained
in the partial case $a_1=a_2=a_3:=a\in (0,1/2]$ of generalized Wallach spaces
with~$a$  equal to $1/6$, $1/8$ and $1/9$ respectively.
Thus ~\cite{AN} deal with $a\in \{1/6, 1/8, 1/9\}$.
Note also that an analogous question concerning the evolution of Riemannian metrics with positive Ricci curvature
on generalized  Wallach spaces  was considered in~\cite{Ab7} for the special case  $a=1/4$ mentioned above.

For $a_1=a_2=a_3:=a\in (0,1/2]$ the system~\eqref{three_equatG}
 takes
the following form:
\begin{eqnarray}\label{three_equat}
\dot{x}_1(t)=f_1(x_1,x_2,x_3)&=&x_1x_2^{-1}+x_1x_3^{-1} - 2a\,(2x_1^2-x_2^2-x_3^2)(x_2x_3)^{-1}-2, \notag \\
\dot{x}_2(t)=f_2(x_1,x_2,x_3)&=&x_2x_1^{-1}+x_2x_3^{-1} - 2a\,(2x_2^2-x_1^2-x_3^2)(x_1x_3)^{-1}-2,\\
\dot{x}_3(t)=f_3(x_1,x_2,x_3)&=&x_3x_1^{-1}+x_3x_2^{-1} - 2a\, (2x_3^2-x_1^2-x_2^2)(x_1x_2)^{-1}-2. \notag
\end{eqnarray}

In~\cite{AN} we  used a detailed description of invariant metrics of positive sectional curvature on the Wallach spaces \eqref{SWS} given   by F.\,M.~Valiev in  \cite{Valiev}.
We reformulate his results in our notations denoting that set by $S$. Then
\begin{eqnarray}\label{DPG}
S&:=&\left\{(x_1,x_2,x_3)\in (0,+\infty)^3 \,\, | \,\, \gamma_1>0,\, \gamma_2>0,\, \gamma_3>0 \right\}
 \setminus  \left\{(r,r,r)\in \mathbb{R}^3 \,\, | \,\, r>0 \right\}, \\ \label{gamma}
\gamma_i&:=&(x_j-x_k)^2+2x_i(x_j+x_k)-3x_i^2, \quad \{i,j,k\}= \{1,2,3\}.
\end{eqnarray}

Thus we are going to study the behavior of trajectories of the abstract dynamical system~\eqref{three_equat}
 with respect to the formal set~$S$
 distracting from their geometric origin and essence.

For $\{i,j,k\}=\{1,2,3\}$ introduce curves $I_i$ in $\mathbb{R}^3$:
\begin{equation}\label{Isets}
x_i=cp^{-2}, \quad x_j=x_k=p, \quad p>0,
\end{equation}
depending on a parameter $c>0$.
Let  $I=\bigcup_{i=1}^3I_i$ and
 $I'=\bigcup_{i=1}^3I_i'$, where each
$I_i'$ is a part of~$I_i$ considered for $p\ge 1$.

\smallskip

For the boundary, the closure and the exterior of a given set $X$ in $\mathbb{R}^3$
we  use notations  $\partial(X)$, $\overline{X}$ and
$\operatorname{ext}(X)$
respectively  (in the standard topology of $\mathbb{R}^3$).
The main result is contained in the following theorem.

\begin{theorem}\label{theo1}
For $a\in (0,1/2)\setminus \{1/4\}$
the following assertions about trajectories of the system~\eqref{three_equat} hold:
 \begin{enumerate}
 \item
 If $a\in (0,3/14)$ then all trajectories originated in $\overline{S}\setminus I'$
  reach  $\operatorname{ext}(S)$ once and remain there forever;  no trajectory originated  in  $\operatorname{ext}(S)$  reaches $\overline{S}$;
 \item
 If   $a= 3/14$ then all trajectories originated  in $\overline{S}\setminus I$
  reach  $\operatorname{ext}(S)$ once and remain there forever;  no trajectory originated  in  $\operatorname{ext}(S)$  reaches $\overline{S}$;
 \item
If   $a\in (3/14, 1/4)$ then some trajectories  originated  in
$\overline{\operatorname{ext}(S)}$
could get into $S$ and return back;
all trajectories originated  in $S\setminus I$
  reach  $\operatorname{ext}(S)$ once and remain there forever.
 \item
If   $a\in (1/4, 1/2)$ then every trajectory initiated in  $S$ remains there forever;
all trajectories initiated in  $\overline{\operatorname{ext}(S)}$  get into  $S$;
 \item  In the case   $a=1/4$ every trajectory of the system~\eqref{three_equat}
initiated in  $S$ remains there forever;
some trajectories initiated in  $\overline{\operatorname{ext}(S)}$  can get into  $S$.
\end{enumerate}
\end{theorem}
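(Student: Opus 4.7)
The five cases are distinguished by how trajectories of~\eqref{three_equat} meet $\partial S$, which lies in the union of the three quadric surfaces $\{\gamma_i=0\}$. At any smooth point of such a piece, a trajectory crosses from $S$ into $\operatorname{ext}(S)$ (or vice versa) precisely according to the sign of the normal derivative
\[
\dot\gamma_i \;=\; \sum_{\ell=1}^{3}\frac{\partial\gamma_i}{\partial x_\ell}\,f_\ell(x_1,x_2,x_3),
\]
evaluated on $\gamma_i=0$. Since the three coefficients of~\eqref{three_equat} coincide with $a$, the system and the triple $(\gamma_1,\gamma_2,\gamma_3)$ are jointly equivariant under permutations of the coordinates, so it suffices to carry out the analysis for $i=1$.

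The main step is the boundary computation. Using $\partial\gamma_1/\partial x_1=2x_2+2x_3-6x_1$, $\partial\gamma_1/\partial x_2=2(x_2-x_3)+2x_1$ and $\partial\gamma_1/\partial x_3=-2(x_2-x_3)+2x_1$, one substitutes the explicit $f_\ell$ from~\eqref{three_equat}, clears the common denominator $x_1x_2x_3$, and reduces modulo the relation $3x_1^2=(x_2-x_3)^2+2x_1(x_2+x_3)$ coming from $\gamma_1=0$. The expected outcome is a factorization in which the dominant factor is $(1-4a)$ and a secondary sign change is governed by $(3-14a)$, concretely of the form
\[
x_1x_2x_3\cdot\dot\gamma_1\big|_{\gamma_1=0}\;=\;(1-4a)\,A(x_1,x_2,x_3)\,+\,(3-14a)\,B(x_1,x_2,x_3),
\]
with $A,B$ polynomials of definite sign on $\overline{S}$ whose common zero locus inside $\partial S\cap(0,\infty)^3$ coincides with $I_1\cap\partial S$. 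The two critical values $a=1/4$ and $a=3/14$ in the theorem then read off directly from the two coefficients, and the exceptional curves emerge as the residual locus where the normal-derivative sign criterion degenerates.

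The second step handles the curves $I_i$ of~\eqref{Isets}, which are forward-invariant by Lemma~\ref{I_i_curves}. Direct substitution $x_1=cp^{-2}$, $x_2=x_3=p$ yields $\gamma_1=4cp^{-1}-3c^2p^{-4}$ and $\gamma_2=\gamma_3=c^2p^{-4}>0$, so $I_1$ meets $\partial S$ transversally inside $I_1$ at the single point $p=(3c/4)^{1/3}$. The restriction of~\eqref{three_equat} to $I_1$ is a one-dimensional ODE in $p$ (with $c$ fixed by the Vol first integral) whose qualitative analysis pins down the tangential contact with $\partial S$ at which the transverse-crossing criterion of Step~1 fails. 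Comparing the location of this tangential arc with the range $p\ge 1$ explains why the excluded set in items (1)--(2) jumps from $I'$ to $I$ exactly at $a=3/14$.

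The final step assembles the cases. For $a<1/4$ the factorization of Step~1 gives $\dot\gamma_1|_{\gamma_1=0}\ge 0$ off $I_1$, yielding the ``exit'' halves of items (1)--(3); for $a>1/4$ the sign reverses and produces the forward-invariance of $S$ in items (4)--(5). The secondary threshold $a=3/14$ controls whether $\dot\gamma_1$ can become negative on the exterior side of $\partial S$ arbitrarily close to $\overline{S}$, which is precisely the dichotomy between ``no trajectory'' and ``some trajectories'' in the re-entry statements. The principal obstacle is the algebraic reduction in Step~1: producing a factorization clean enough to expose \emph{both} thresholds simultaneously and to identify the residual zero set with $I$ requires a careful use of the quadratic constraint $\gamma_1=0$ together with the $(x_2,x_3)$-symmetry. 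A secondary delicate point is the tangential analysis on $I\cap\partial S$ in Step~2, where the normal-derivative criterion is inconclusive and one must reason directly on the invariant curve.
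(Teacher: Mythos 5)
Your boundary computation is, in substance, the paper's: the paper parametrizes $\Gamma_i$ by rays $x_i=\nu t$, $x_j=\mu t$, $x_k=t$ and obtains $(\bold{V},\nabla\gamma_i)=12t\nu(\nu-1)\bigl(G(\nu)a-H(\nu)\bigr)$, and your proposed decomposition $(1-4a)A+(3-14a)B$ with $A,B$ of one sign is algebraically equivalent to the paper's statement that $F=H/G$ ranges over $[3/14,1/4]$ (one checks $A\propto G\,(3/14-F)$, $B\propto G\,(F-1/4)$). But two things go wrong. First, your ``final step'' misreads your own factorization: for $a\in(3/14,1/4)$ the two coefficients $1-4a$ and $3-14a$ have opposite signs, so $\dot\gamma_i$ is \emph{not} of one sign off $I$ on $\Gamma_i$ --- it vanishes along two rays $\nu=\nu_1(a),\nu_2(a)$ and changes sign between them (this is exactly what produces the ``enter and return'' behaviour of item (3)); the zero locus is not $I_1\cap\partial S$ except at the threshold $a=3/14$. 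Your blanket claim ``$\dot\gamma_1\ge0$ off $I_1$ for $a<1/4$'' is therefore false (and has the orientation backwards for an exit statement, since $S=\{\gamma_i>0\}$).

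Second, and more seriously, the sign of $\dot\gamma_i$ on $\partial S$ only controls what happens \emph{if} a trajectory meets the boundary; it does not show that trajectories starting in $S\setminus I$ actually reach $\partial S$, nor that trajectories in $\operatorname{ext}(S)$ reach $S$ when $a>1/4$. This existence-of-crossing step is the load-bearing part of items (1)--(4) (``all trajectories \dots reach $\operatorname{ext}(S)$''), and the paper gets it from Proposition~1 of \cite{AN} (an asymptotic intermediate-value argument valid for $a\ne1/4$) together with the location of the equilibria $\bold{o_0},\bold{o_1},\bold{o_2},\bold{o_3}$ and their stable/unstable manifolds (Lemmas~\ref{sing_points_out_S} and~\ref{lefolds}); note the equilibria lie \emph{inside} $S$ for $a>3/14$, so escape is not automatic. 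Your plan supplies nothing in its place. Relatedly, item (5) at $a=1/4$ cannot be reached by your method at all: there $\dot\gamma_i>0$ on $\partial S$ gives forward-invariance of $S$, but the claim that \emph{some} exterior trajectories enter $S$ requires the separate degenerate-saddle analysis at $(1,1)$ (six hyperbolic sectors, stable separatrices $c_i\setminus c_i'$) that the paper imports from \cite{AANS1}. Finally, the $I'$ versus $I$ distinction between items (1) and (2) is decided by whether $\bold{o_i}$ lies outside, on, or inside $\partial S$ (the $3/14$ threshold again) and by which halves of $I_i$ are stable or unstable manifolds; your Step~2 names this issue but does not resolve it.
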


Note that  Theorem~\ref{theo1}
can cover results of~Theorem 1 in \cite{AN} obtained for the Wallach spaces~\eqref{SWS}
(corresponding to the partial cases $a=1/6$, $a=1/8$ and $a=1/9$ respectively).

\section{Auxiliary results}

To prove Theorem~\ref{theo1}   we  need some auxiliary assertions.

\subsection{A description of the set $S$}
Firstly we should give  a detailed proof of structural properties of~$S$
that was noted in  \cite{AN} but omitted.
 Observe that  each $\gamma_i$ in~\eqref{gamma}
is symmetric under the permutations $i\to j\to k\to i$.

\smallskip

\begin{lemma}\label{Valiev_set}
The set $S$ is connected in the standard topology of $\mathbb{R}^3$ and
bounded by the union $\Gamma_1 \cup \Gamma_2 \cup \Gamma_3$ of  pairwise disjoint conic surfaces
$$
\Gamma_i:=\left\{(x_1,x_2,x_3)\in (0,+\infty)^3\,\,|\,\,  \gamma_i=(x_j-x_k)^2+2x_i(x_j+x_k)-3x_i^2=0 \right\}
$$
in $\mathbb{R}^3$, where $\{i,j, k\}=\{1,2,3\}$.
\end{lemma}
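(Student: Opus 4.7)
My plan breaks the statement into two halves: an algebraic half showing each $\Gamma_i$ is a disjoint conic cone, and a topological half giving connectedness of $S$ and the boundary identification. The key algebraic input is a single identity that both separates the three surfaces and pins down where each $\gamma_i$ can change sign.

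First, every $\gamma_i$ is a homogeneous polynomial of degree $2$, so $\gamma_i(\lambda x)=\lambda^2\gamma_i(x)$ for all $\lambda>0$. Hence $\Gamma_i\cap(0,+\infty)^3$ is a cone with apex at the origin --- this is the ``conic surface'' description --- and $\widetilde S:=\{\gamma_1>0,\gamma_2>0,\gamma_3>0\}\cap(0,+\infty)^3$ is invariant under positive dilations. Pairwise disjointness of $\Gamma_1,\Gamma_2,\Gamma_3$ in $(0,+\infty)^3$ follows from the identity
$$
\gamma_1+\gamma_2=2(x_3+x_2-x_1)(x_3+x_1-x_2)
$$
and its two cyclic analogues: if $\gamma_1=\gamma_2=0$ at a positive point then one factor vanishes, say $x_1=x_2+x_3$, but then direct substitution gives $\gamma_1=(x_2-x_3)^2-(x_2+x_3)^2=-4x_2x_3<0$, a contradiction. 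The same identity shows the three open sets $\{\gamma_i<0\}\cap(0,+\infty)^3$ are pairwise disjoint: if $\gamma_1,\gamma_2<0$ then $\gamma_1+\gamma_2<0$ forces (say) $x_1>x_2+x_3$, whence $x_1-x_3>x_2>0$ gives $\gamma_2>x_2^2+2x_2(x_1+x_3)-3x_2^2=2x_2(x_1+x_3-x_2)>0$, contradicting $\gamma_2<0$.

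Combining these observations, the positive octant decomposes as the disjoint union of $\widetilde S$, the three ``corner regions'' $\{\gamma_i<0\}\cap(0,+\infty)^3$, and the three cones $\Gamma_1,\Gamma_2,\Gamma_3$. Since $\gamma_i(1,1,1)=1$, the centre $(1,1,1)$ lies in $\widetilde S$. Solving $\gamma_i=0$ as a quadratic in $x_i$ with $x_j,x_k>0$ fixed produces one non-positive root and one strictly positive root $x_i^+(x_j,x_k)$, so $\{\gamma_i<0\}\cap(0,+\infty)^3=\{x_i>x_i^+(x_j,x_k)\}$ is the open region above a continuous graph, hence connected. On the cross-section $\Sigma:=\{x_1+x_2+x_3=3\}\cap(0,+\infty)^3$, each pocket $\{\gamma_i<0\}\cap\Sigma$ is thus a connected open subset of the triangle $\Sigma$ attached to $\partial\Sigma$ only in a neighbourhood of the vertex where $x_i$ dominates, so the three disjoint pockets do not disconnect $\Sigma$. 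Consequently $\widetilde S\cap\Sigma$ is connected, and scaling-invariance promotes this to connectedness of $\widetilde S$. Finally $S=\widetilde S\setminus\{(r,r,r):r>0\}$ differs from the three-dimensional connected open set $\widetilde S$ only by removal of a one-dimensional ray, so $S$ remains connected, and the decomposition above identifies its topological boundary inside $(0,+\infty)^3$ with $\Gamma_1\cup\Gamma_2\cup\Gamma_3$.

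The main obstacle I anticipate is the planar topology on $\Sigma$: rigorously ruling out the possibility that the three corner pockets could pinch $\Sigma$ into more than one residual component. I would resolve this by exploiting the explicit subgraph description $x_i>x_i^+(x_j,x_k)$ together with the strict monotonicity of $x_i^+$ in each argument, which confines each pocket to a well-behaved ``bite'' near one vertex of the triangle, so that removal of three disjoint bites from $\Sigma$ necessarily leaves a single connected central region containing $(1,1,1)$.
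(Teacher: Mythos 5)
Your proposal is correct and follows essentially the same route as the paper: pairwise disjointness of the $\Gamma_i$ via the factorization of $\gamma_i+\gamma_j$ into linear factors (the paper also uses the factorization of $\gamma_i-\gamma_j$, but your re-substitution of $x_1=x_2+x_3$ into $\gamma_1$ does the same job), and the boundary description via solving $\gamma_i=0$ as a quadratic in $x_i$ with one non-positive root and one positive root $x_i^{+}=\Phi(x_j,x_k)$, so that $\gamma_i>0$ is equivalent to $0<x_i<\Phi(x_j,x_k)$. The only real divergence is that you supply an actual argument for connectedness (cross-section triangle, three disjoint pockets confined to $x_i>\max(x_j,x_k)$, then removal of the one-dimensional ray from a connected open set), whereas the paper simply declares connectedness obvious; the planar step you flag as the remaining obstacle is genuine but routine, and your version is the more complete of the two on this point.
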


\begin{proof}
{\it The sets $\Gamma_1$, $\Gamma_2$ and $\Gamma_3$ are  pairwise disjoint}.
By symmetry it suffices to show  that
 $\Gamma_i$ and $\Gamma_j$ have no
common point  with positive coordinates if $i\ne j$.
Suppose by contrary that some triple $(x_1,x_2,x_3)$ satisfies the  equations $\gamma_i=0$ and $\gamma_j=0$
defining $\Gamma_i$ and $\Gamma_j$. Then
$\gamma_i+\gamma_j=0$ and $\gamma_i-\gamma_j=0$ imply the system
of equations
$(x_i-x_j-x_k)(x_i-x_j+x_k)=0$ and $(x_i-x_j)(x_i+x_j-x_k)=0$
admitting only solutions $x_i=x_j$, $x_k=0$
and $x_i=x_k$, $x_j=0$, where $\{i,j, k\}=\{1,2,3\}$.
But such points are out of our considerations being incompatible
with the conditions $x_1>0$, $x_2>0$ and $x_3>0$.
Hence $\Gamma_i \cap \Gamma_j=\emptyset$ for all  $(x_1, x_2, x_3)\in (0,+\infty)^3$.

\medskip

{\it The set $\Gamma_1 \cup \Gamma_2 \cup \Gamma_3$ is the boundary of $S$}.
Solving  $\gamma_i=0$  with respect to $x_i$ as a quadratic equation
we get two explicit expressions  $x_i=\Psi$  and $x_i=\Phi$ as its \glqq roots``,
where
\begin{equation}\label{Surface_Phi}
\Psi(x_j,x_k)= \frac{x_j+x_k- 2\sqrt{x_j^2-x_jx_k+x_k^2}}{3}, \quad
\Phi(x_j,x_k)= \frac{x_j+x_k+ 2\sqrt{x_j^2-x_jx_k+x_k^2}}{3}.
\end{equation}

Therefore the cone defined by $\gamma_i=0$ consists of two connected components (parts),
say  $\Gamma_i^{-}$ and~$\Gamma_i$,
defined by the equations $x_i=\Psi(x_j, x_k)$
and $x_i=\Phi(x_j, x_k)$ respectively.
It is clear now that the inequality  $\gamma_i>0$ is equivalent to
$\Psi<x_i<\Phi$
as a quadratic inequality with respect to~$x_i$.
It is  easy to observe that $\Psi<0$ at  $x_j>0$ and $x_k>0$
(the surface~$\Gamma_i^{-}$ lies not upper than the plane $x_i=0$).
Therefore $\gamma_i>0$ is equivalent to $0<x_i<\Phi(x_j,x_k)$.
This means that~$S$
is bounded by the plane  $x_i=0$ and the upper part $\Gamma_i$
of the cone $\gamma_i=0$.

By symmetry it is easy to show that the surfaces $\Gamma_j$ and $\Gamma_k$ are also
parts of $\partial (S)$
(see the left panel of Figure~\ref{Gammas0}).
The connectedness of $S$ is obvious.
Lemma~\ref{Valiev_set} is proved.
\end{proof}

\begin{figure}[h]
\centering
\includegraphics[angle=0, width=0.45\textwidth]{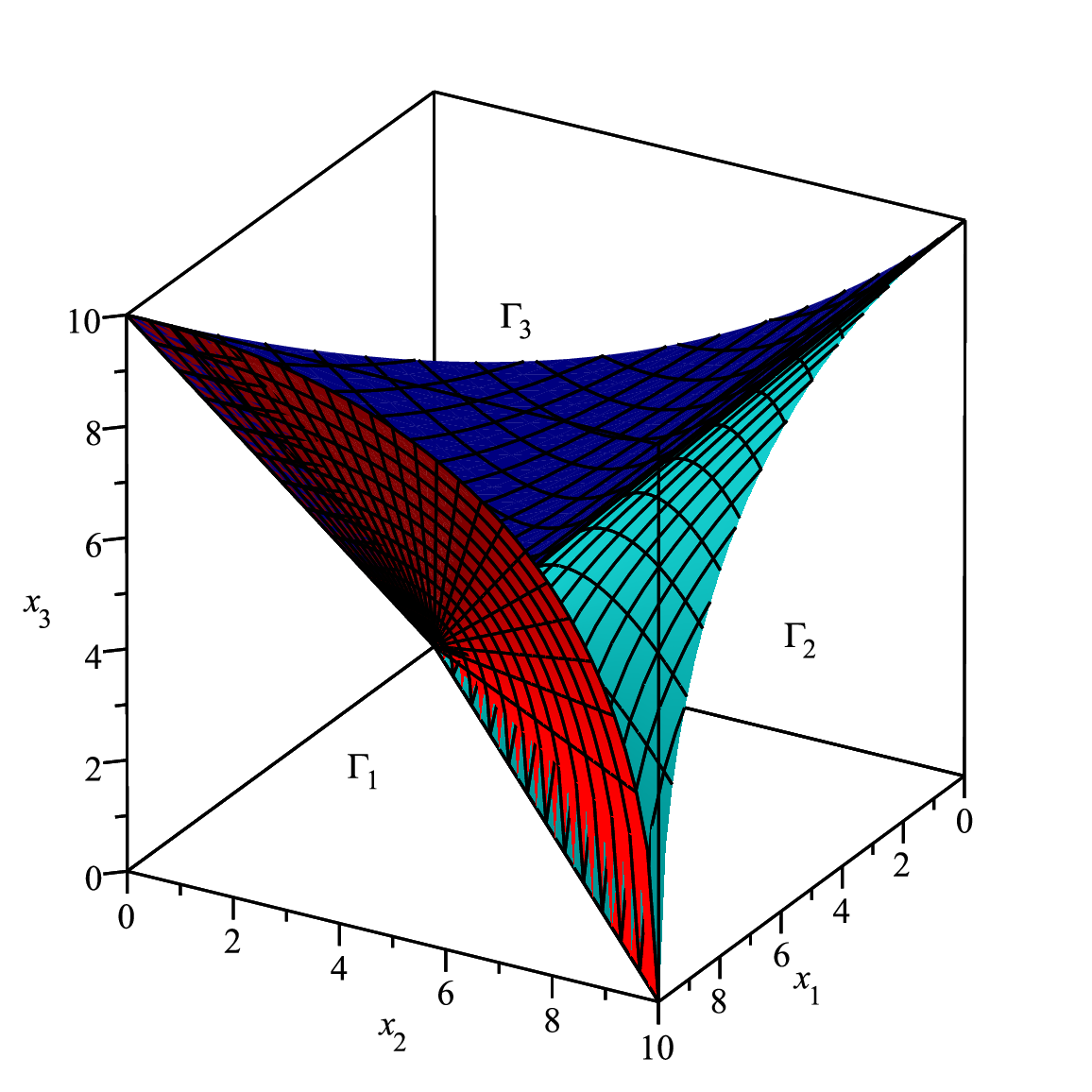}
\includegraphics[angle=0, width=0.45\textwidth]{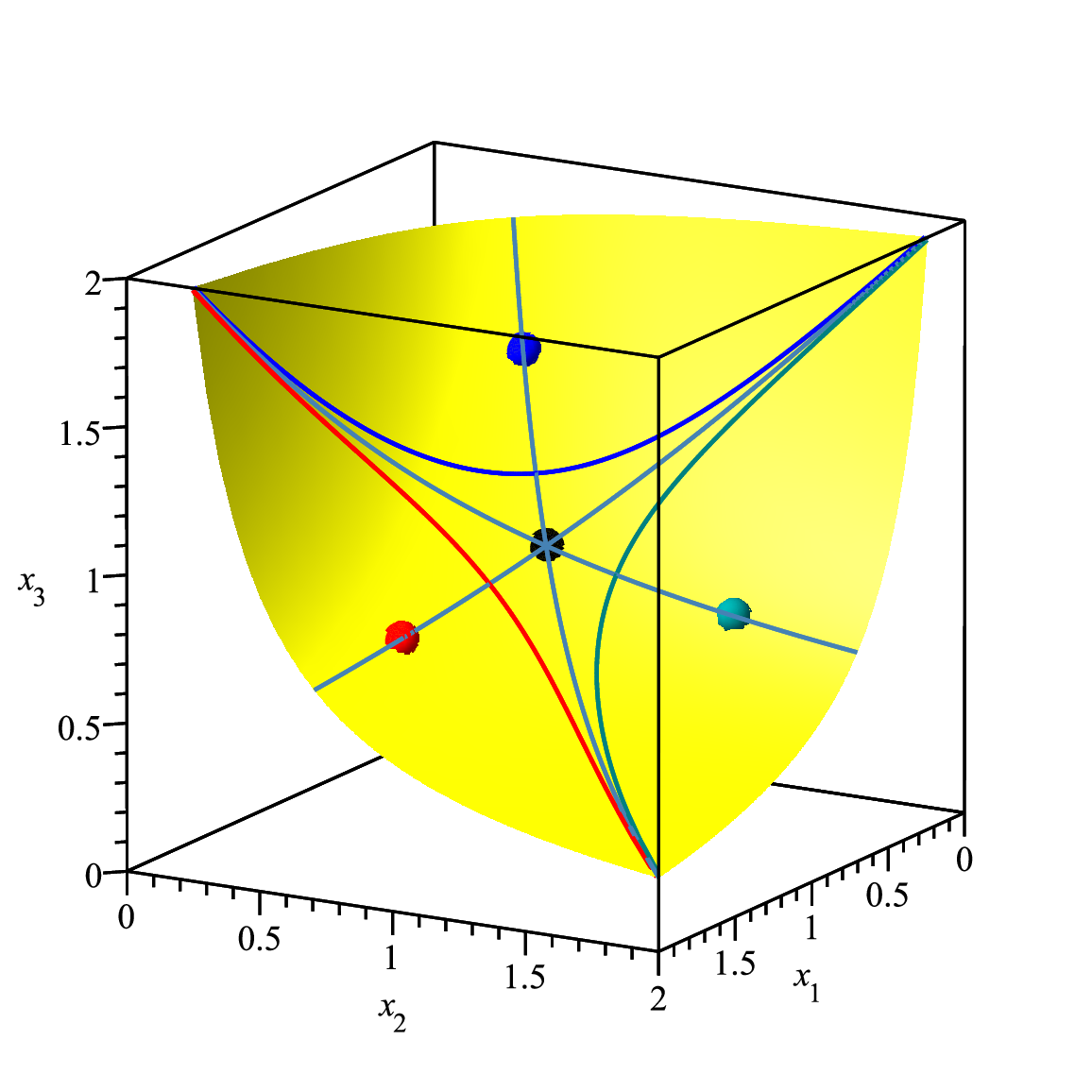}
\caption{The  cones $\Gamma_1, \Gamma_2,  \Gamma_3$ and the surface $\Sigma$ at $c=1$.}
\label{Gammas0}
\end{figure}

\smallskip

Introduce the surface $\Sigma$
determined by the
equation $x_1x_2x_3=c$, where $c$ is any positive constant,

Consider  smooth space curves  $\Sigma\cap \Gamma_i$,  $i=1,2,3$
(these curves and~$\Sigma$ are depicted in the right panel of Figure~\ref{Gammas0}
for the case $c=1$ in red, teal, blue and gold colors respectively).
For each $i=1,2,3$ introduce a planar curve $s_i$ being the orthogonal projection  of  $\Sigma\cap \Gamma_i$ onto the plane $x_3=0$.
Denote by  $D$ the orthogonal projection of
the set $\Sigma \cap S$ onto the plane $x_3=0$.
Lemma~\ref{Valiev_set} implies the following corollary that
can easily be proved by substituting $x_3=cx_1^{-1}x_2^{-1}$ into
the equations $\gamma_i=0$ of the cones $\Gamma_i$ for each $i=1,2,3$.

\begin{corollary}\label{domain_D}
The curves  $s_1,s_2,s_3$ are smooth and defined by the following equations respectively:
\begin{eqnarray}\label{L_i}\notag
l_1&:=&3x_1^4x_2^2-2x_1^3x_2^3-x_1^2x_2^4-2cx_1^2x_2+2cx_1x_2^2-c^2=0,  \\
l_2&:=&3x_1^2x_2^4-2x_1^3x_2^3-x_1^4x_2^2-2cx_1x_2^2+2cx_1^2x_2-c^2=0,\\ \notag
l_3&:=&x_1^4x_2^2-2x_1^3x_2^3+x_1^2x_2^4+2cx_1^2x_2+2cx_1x_2^2-3c^2=0,
\end{eqnarray}
where $x_1>0, x_2>0$. Moreover, $s_1,s_2$ and $s_3$
are pairwise disjoint and bound the domain~$D$.
\end{corollary}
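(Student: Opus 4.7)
The plan is to exploit the fact that orthogonal projection onto the plane $x_3=0$ restricts to a diffeomorphism from $\Sigma$ onto the open quadrant $Q:=\{(x_1,x_2)\mid x_1>0,\,x_2>0\}$, with smooth inverse $(x_1,x_2)\mapsto\bigl(x_1,x_2,c\,x_1^{-1}x_2^{-1}\bigr)$. Under this identification $s_i$ is precisely the image of the curve $\Sigma\cap\Gamma_i$, and $D$ is the image of $\Sigma\cap S$. Thus the whole corollary reduces to transferring structural properties of the $\Gamma_i$ already proved in Lemma~\ref{Valiev_set} through this diffeomorphism.

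First I would derive the explicit equations by substitution. Plugging $x_3=c\,x_1^{-1}x_2^{-1}$ into each $\gamma_i=0$ and clearing denominators by multiplying through by $x_1^2x_2^2$ gives polynomial equations in $x_1,x_2$ which, after routine expansion, coincide with $l_1,l_2,l_3$. For example, for $\gamma_3=(x_1-x_2)^2+2x_3(x_1+x_2)-3x_3^2$ the substitution yields $(x_1-x_2)^2+2c(x_1+x_2)/(x_1x_2)-3c^2/(x_1x_2)^2$, and multiplication by $x_1^2x_2^2$ reproduces $l_3$ as stated; the other two follow by the analogous computation, using the symmetry of $\gamma_i$ under the cyclic permutation $i\to j\to k\to i$.

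Next I would verify smoothness. On the positive octant the surface $\Sigma$ is smooth (the gradient of $x_1x_2x_3$ is nowhere zero there). Each $\Gamma_i$ is also smooth on $(0,+\infty)^3$, as can be seen by using the explicit graph representation $x_i=\Phi(x_j,x_k)$ from~\eqref{Surface_Phi}, where $\Phi$ is smooth on $Q$ since $x_j^2-x_jx_k+x_k^2\ge\tfrac14(x_j+x_k)^2>0$. The two surfaces are transverse (the tangent plane to $\Sigma$ is never tangent to a cone $\Gamma_i$ at a point with positive coordinates, as one can verify from linear independence of the gradients), so $\Sigma\cap\Gamma_i$ is a smooth one-dimensional submanifold, and its image $s_i$ under the projection diffeomorphism is therefore smooth.

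Pairwise disjointness of $s_1,s_2,s_3$ is immediate: since the projection restricted to $\Sigma$ is injective, and since $\Gamma_i\cap\Gamma_j=\emptyset$ in $(0,+\infty)^3$ by Lemma~\ref{Valiev_set}, we conclude $s_i\cap s_j=\emptyset$ for $i\neq j$. Finally, because $\partial(S)=\Gamma_1\cup\Gamma_2\cup\Gamma_3$, the relative boundary of $\Sigma\cap S$ inside $\Sigma$ is $\bigcup_i(\Sigma\cap\Gamma_i)$; transferring via the projection gives $\partial(D)=s_1\cup s_2\cup s_3$, so the $s_i$ bound $D$. The only mildly tedious step is the algebraic substitution producing $l_1,l_2,l_3$; I do not anticipate any conceptual obstacle, since every topological statement is inherited from Lemma~\ref{Valiev_set} via the parameterization of $\Sigma$ by $Q$.
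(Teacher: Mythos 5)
Your proposal is correct and follows essentially the same route as the paper, which proves the corollary precisely by substituting $x_3=c\,x_1^{-1}x_2^{-1}$ into the equations $\gamma_i=0$ and transferring the disjointness and boundary properties from Lemma~\ref{Valiev_set}; your substitution for $\gamma_3$ checks out (and the other two give $\pm l_1,\pm l_2$, which define the same zero sets). The only addition you make is an explicit transversality argument for smoothness, which the paper leaves implicit and which is easily justified since $\partial\gamma_i/\partial x_i<0$ while $\partial\gamma_i/\partial x_j+\partial\gamma_i/\partial x_k=4x_i>0$ on $\Gamma_i$, so $\nabla\gamma_i$ cannot be parallel to the everywhere-positive gradient of $x_1x_2x_3$.
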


\subsection{Invariant sets and singular points of   \eqref{three_equat}}

Singular points of the system~\eqref{three_equat} are exactly invariant Einstein metrics of the space under studying (see~\cite{Ab7, AANS1, Rod}).
We will reformulate some well-known results
for convenience of readers.
According to  \cite{AANS1, AANS2} for all $a\in (0,1/2)\setminus \{1/4\}$
the system of algebraic equations
$f_1(x_1,x_2,x_3)=0$, $f_2(x_1,x_2,x_3)=0$, $f_3(x_1,x_2,x_3)=0$ given in~\eqref{three_equat}
admits the following four families of one-parametric solutions
being singular points for  \eqref{three_equat} at $\tau>0$:
\begin{equation}\label{lines_equilib}
x_1= x_2= x_3=\tau,   \qquad
x_i=\tau\kappa, ~ x_j=x_k=\tau,   \quad \{i,j,k\}=\{1,2,3\},
\end{equation}
where $\kappa:=\frac{1-2a}{2a}$.
Denote  straight lines in~\eqref{lines_equilib} respectively by $e_0,e_1,e_2,e_3$.
They intersect  the invariant surface~$\Sigma$
at the points
\begin{equation}\label{ququ}
\bold{o_0}:= (\sqrt[3]{c}, \sqrt[3]{c}, \sqrt[3]{c}),\quad \bold{o_1}:=\left(q\kappa, q, q\right), \quad
\bold{o_2}:= \left(q, q\kappa, q\right), \quad  \bold{\bold{o_3}}:=(q, q, q \kappa),
\end{equation}
 where
$q:=\sqrt[3]{c\kappa^{-1}}$
(the case $a=1/6$, $c=1$ is depicted in the right panel of Figure~\ref{Gammas0}
and  corresponding singular points are depicted in black, red, teal and blue colors for visual clarity).

\smallskip

Recall the  curves $I_1$, $I_2$ and $I_3$, introduced in \eqref{Isets}.

\begin{lemma}\label{I_i_curves}
The following assertions are true for the system~\eqref{three_equat}:
\begin{enumerate}
 \item  Every surface  $x_1x_2x_3-c=0$
is invariant, $c>0$;
 \item  At a fixed $c$  every curve $x_i=ct^{-2}$,
 $x_j=x_k=t$
is invariant
and intersects the only cone $\Gamma_i$ at the unique point $x_i=ct_0^{-2}$, $x_j=x_k=t_0$,
approaching the cones~$\Gamma_j$ and $\Gamma_k$ as close  as we like,
where $t_0=\sqrt[3]{6c}/2$, $\{i,j,k\}=\{1,2,3\}$.
\end{enumerate}
\end{lemma}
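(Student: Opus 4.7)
My plan is to dispatch Part~(1) by a direct first-integral computation and Part~(2) by combining a symmetry of~\eqref{three_equat} with an explicit substitution into $\gamma_i,\gamma_j,\gamma_k$.

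For Part~(1), I would differentiate $V(t):=x_1x_2x_3$ along an arbitrary trajectory of~\eqref{three_equat} and verify directly that $\dot V\equiv 0$. Writing
\[
\dot V = f_1\,x_2x_3 + f_2\,x_1x_3 + f_3\,x_1x_2,
\]
each summand $f_i\cdot x_jx_k$ contributes $x_ix_j+x_ix_k-2x_jx_k-2a(2x_i^2-x_j^2-x_k^2)$. Summing cyclically over $i\in\{1,2,3\}$, every off-diagonal monomial $x_\ell x_m$ picks up coefficient $1+1-2=0$ and every square $x_\ell^2$ picks up coefficient $-4a+2a+2a=0$, so $\dot V\equiv 0$ and every level surface $x_1x_2x_3=c$ is invariant. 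This is also consistent with the recollection in Section~\ref{vvedenie} that $\mathrm{Vol}=x_1^{1/a_1}x_2^{1/a_2}x_3^{1/a_3}$ is a first integral of~\eqref{three_equatG}.

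For the invariance assertion in Part~(2), I would exploit the fact that the vector field in~\eqref{three_equat} is equivariant under every permutation of $(x_1,x_2,x_3)$; in particular, the transposition swapping $x_j$ and $x_k$ has fixed-point set the plane $\{x_j=x_k\}$, so by uniqueness of solutions this plane is invariant. The curve $I_i$ from~\eqref{Isets} is exactly the intersection of this plane with the invariant surface $x_1x_2x_3=c$ from Part~(1), hence, as an intersection of two invariant sets, is itself invariant.

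The remaining geometric claims are a routine substitution. Plugging $x_i=ct^{-2}$, $x_j=x_k=t$ into~\eqref{gamma} reduces $\gamma_i=0$ to $4ct^{-1}-3c^2t^{-4}=0$, whose unique positive root is $t_0=\sqrt[3]{3c/4}=\sqrt[3]{6c}/2$, yielding the claimed intersection point with $\Gamma_i$. In contrast, the same substitution collapses $\gamma_j$ and $\gamma_k$ to the clean identity $\gamma_j=\gamma_k=c^2t^{-4}$, which is strictly positive for all $t>0$ and so prevents $I_i$ from ever reaching $\Gamma_j$ or $\Gamma_k$; however, since $c^2t^{-4}\to 0$ as $t\to\infty$ while $|\nabla\gamma_j|$ is of order $t$ along the curve, the Euclidean distance from $I_i$ to each of $\Gamma_j$ and $\Gamma_k$ tends to zero. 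I expect the only non-automatic step to be the cancellation bookkeeping in Part~(1); the pleasant conceptual point is that two quite different sources of invariance (a level set of a first integral and the fixed-point plane of a discrete symmetry) intersect cleanly to produce the curves~$I_i$.
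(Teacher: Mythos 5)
Your proposal is correct, and both halves take a genuinely different route from the paper. For Part~(1) you verify directly that $x_1x_2x_3$ is a first integral of~\eqref{three_equat} by cyclic cancellation in $\dot V=f_1x_2x_3+f_2x_1x_3+f_3x_1x_2$ (the bookkeeping checks out: each $x_\ell x_m$ gets $1+1-2=0$ and each $x_\ell^2$ gets $-4a+2a+2a=0$); the paper instead proves the more general statement that every level set of $x_1^{d_1}x_2^{d_2}x_3^{d_3}$ is invariant for the general system~\eqref{three_equatG}, using the structural identity $f_i=-2x_i\bigl(\mathbf{r}_i-\tfrac{S}{n}\bigr)$, and then specializes to $d_1=d_2=d_3$. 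Your computation is more elementary and self-contained; the paper's buys the general Wallach-space statement. For Part~(2) you obtain invariance of $I_i$ as the intersection of the fixed-point plane $\{x_j=x_k\}$ of a transposition symmetry with the invariant surface $x_1x_2x_3=c$, which is clean and valid (the field is smooth on $(0,+\infty)^3$, so uniqueness applies). The paper instead substitutes the parametrization into $f_1,f_2,f_3$ and checks $f_2/f_1\equiv dx_2/dx_1$ and $f_3/f_1\equiv dx_3/dx_1$; this is longer but yields as a by-product the explicit factorization $f_1(t,t,ct^{-2})=\tfrac{(t^3-c)((1-2a)t^3-2ac)}{3ct^3}$, hence the location of the equilibria on $I_i$ and the direction of flow along it — information that is used later (Lemma~\ref{lefolds} and the proof of Theorem~\ref{theo1}), and which your symmetry argument does not supply. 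Finally, your closed-form identity $\gamma_j=\gamma_k=c^2t^{-4}$ along $I_i$ is sharper than the paper's mere computation of the limits, and your remark that $\gamma_j\to 0$ together with $|\nabla\gamma_j|\asymp t$ gives distance $O(t^{-5})$ to $\Gamma_j$ actually justifies the phrase ``as close as we like'' slightly more carefully than the paper does.
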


\begin{proof}
$(1)$
We can establish a more general fact that any function $U(x_1,x_2,x_3):=x_1^{d_1}x_2^{d_2}x_3^{d_3}-C$
can define an invariant surface of the abstract dynamical system~\eqref{three_equatG} for any real numbers $d_1, d_2$ and $d_3$.
Actually we have
$f_i(x_1,x_2,x_3) = -2x_i\left(\bold{r_i}-\frac{S}{n}\right)$
 in~\eqref{three_equatG}, where
  $S= d_1{\bf r_1}+d_2{\bf r_2}+d_3{\bf r_3}$, $n=d_1+d_2+d_3$
 (in the  geometric context $d_i$ means the dimension of the module $\mathfrak{p}_i$ in the decomposition
$\mathfrak{p}=\mathfrak{p}_1\oplus \mathfrak{p}_2\oplus \mathfrak{p}_3$ of
 isotropy representation of a given generalized Wallach space,
  ${\bf r_i}$ and $S$  respectively mean the principal Ricci curvatures and the scalar curvature
of the Riemannian metric),  $i=1,2,3$.
It follows then
$$
\sum_{i=1}^3{\frac{\partial U}{\partial x_i}f_i}=(U+C) \sum_{i=1}^3{\frac{d_i}{x_i}f_i}=
-2(U+C) \sum_{i=1}^3{d_i\left(\bold{r_i}-\frac{S}{n}\right)} = 0.
$$

The parameters $a_i$ are connected with $d_i$ by relations $d_i=A/a_i$ for some positive  $A$.
Obviously  $d_1=d_2=d_3$ in the  partial case $a_1=a_2=a_3$ of the system~\eqref{three_equatG}, therefore,
without loss of generality we can assume that~\eqref{three_equat} admits
an one-parametric family of invariant surfaces~$x_1x_2x_3-c=0$.

\smallskip

$(2)$
Due to symmetry consider  the curve  $I_3$ only.
Substitute $x_1=x_2=t$, $x_3=ct^{-2}$ into $f_1,f_2$ and $f_3$ in~\eqref{three_equat}.
Then as  calculations show
$$
f_1\big(t,t,ct^{-2}\big)=f_2\big(t,t,ct^{-2}\big)=
-\frac{t^3}{2c}\,f_3\big(t,t,ct^{-2}\big)=
\frac{(t^3-c)\big((1-2a)t^3-2ac\big)}{3ct^3}.
$$
Therefore  we have   stationary trajectories
at  $t= \sqrt[3]{c}$  and $t=q$.
The identities
$\dfrac{dx_2}{dx_1}\equiv 1$ and $\dfrac{f_2\big(t,t,ct^{-2}\big)}{f_1\big(t,t,ct^{-2}\big)} \equiv 1$
implies that   $x_1=x_2=t$, $x_3=ct^{-2}$ is  a trajectory of~\eqref{three_equat}
for all $t>0$ such that $t\ne \sqrt[3]{c}$ and $t\ne q$.
By analogy
$ -2ct^{-3}\equiv  \dfrac{d(ct^{-2})}{dt}=\dfrac{dx_3}{dx_1}= \dfrac{f_3\big(t,t,ct^{-2}\big)}{f_1\big(t,t,ct^{-2}\big)} \equiv -2ct^{-3}$.
Therefore the curve  $I_3$ is indeed  invariant under \eqref{three_equat}.
As for  the curves $I_1$ and $I_2$
they are invariant as well  by symmetry ($I_1$, $I_2$ and $I_3$ are shown
in the right panel of Figure~\ref{Gammas0} in aquamarine color).

\smallskip

It is  easy to check that the equation $\gamma_3(t,t,ct^{-2})=0$
equivalent to $4t^3-3c=0$  admits the only   root $\sqrt[3]{6c}/2$ and
$\lim_{t\to +\infty}\gamma_1(t,t,ct^{-2})=\lim_{t\to +\infty}\gamma_2(t,t,ct^{-2})=0$.
Lemma~\ref{I_i_curves} is proved.
\end{proof}

\begin{corollary}\label{c_i_curves}
The projections
$c_1:=\left\{(x_1,x_2)\,|\,  x_1=cx_2^{-2} \right\}$,
$c_2:=\left\{(x_1,x_2)\,|\,  x_2=cx_1^{-2} \right\}$ and
$c_3:=\left\{(x_1,x_2)\,|\,  x_2=x_1 \right\}$
of the space curves $I_i$
onto the plane $x_3=0$
are  invariant sets of the system~\eqref{rnrf_sc}, where $i=1,2,3$
(see~ also \cite{AN}).
$I_1',I_2'$ and $I_3'$ has respectively the projections
$c_1':=\left\{(x_1,x_2)\in c_1\,|\, 0<x_1\le 1  \right\}$,
$c_2':=\left\{(x_1,x_2)\in c_2\,|\,  x_1\ge 1 \right\}$ and
$c_3':=\left\{(x_1,x_2)\in c_3\,|\,   x_1\ge 1\right\}$.
\end{corollary}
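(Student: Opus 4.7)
The plan is to treat this as an immediate consequence of Lemma~\ref{I_i_curves}, since the planar system~\eqref{rnrf_sc} is nothing other than the restriction of~\eqref{three_equat} to the invariant volume surface $x_1x_2x_3=c$ (with the unit-volume normalization $c=1$ used implicitly in the statement of the primed projections), written in the local coordinates $(x_1,x_2)$ via $x_3=c/(x_1x_2)$. So the key point is that invariance in the ambient system transports faithfully to invariance in the reduced system whenever the invariant set already lies on the volume surface.

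First I would verify that each curve $I_i$ lies on $\{x_1x_2x_3=c\}$: parametrizing $I_i$ by $x_i=cp^{-2}$, $x_j=x_k=p$ gives $x_1x_2x_3=(cp^{-2})\cdot p\cdot p=c$ identically. Consequently the map $\pi\colon(x_1,x_2,x_3)\mapsto(x_1,x_2)$ restricts to a diffeomorphism from the surface $\{x_1x_2x_3=c\}\cap (0,+\infty)^3$ onto the open quadrant $\{x_1>0,\,x_2>0\}$, carrying orbits of~\eqref{three_equat} on the volume surface onto orbits of~\eqref{rnrf_sc}. Since each $I_i$ is invariant under~\eqref{three_equat} by Lemma~\ref{I_i_curves}(2), its image $\pi(I_i)$ is automatically invariant under~\eqref{rnrf_sc}.

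Next I would compute the images explicitly (taking $c=1$): for $I_1$ parametrized by $(p^{-2},p,p)$ the projection is $\{(x_2^{-2},x_2):x_2>0\}=c_1$; by symmetry $\pi(I_2)=c_2$; and for $I_3$ parametrized by $(p,p,p^{-2})$ the projection is the diagonal $\{(p,p):p>0\}=c_3$. The restriction $p\geq 1$ defining $I_i'$ then translates directly: $x_2=p\geq 1$ (equivalently $0<x_1=x_2^{-2}\leq 1$) yields $c_1'$, $x_1=p\geq 1$ yields $c_2'$, and $x_1=x_2=p\geq 1$ yields $c_3'$.

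I do not expect any serious obstacle, as the corollary is essentially a repackaging of Lemma~\ref{I_i_curves}(2) together with the observation that $(x_1,x_2)$ are global coordinates on the volume surface. The only item deserving care is to state the unit-volume normalization explicitly when identifying the primed parts, because the threshold $p=1$ corresponds to $x_1\leq 1$ or $x_1\geq 1$ precisely when $c=1$; for general $c$ the bound would instead read $0<x_1\leq c$ or $x_1\geq \sqrt{c}$, and such a remark should accompany the proof for completeness.
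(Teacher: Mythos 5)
Your argument is correct and is exactly the route the paper intends (the corollary is stated there without proof as an immediate consequence of Lemma~\ref{I_i_curves}): each $I_i$ lies on the invariant surface $x_1x_2x_3=c$, and for the normalization $c=1$ the projection $(x_1,x_2,x_3)\mapsto(x_1,x_2)$ is a diffeomorphism of that surface onto the open quadrant conjugating the restricted flow of~\eqref{three_equat} with~\eqref{rnrf_sc}, so invariance transfers and the images are as you compute. One small slip in your closing aside: for general $c$ the bounds for $c_2'$ and $c_3'$ stay $x_1=p\ge 1$ (not $x_1\ge\sqrt{c}$), since on $I_2'$ and $I_3'$ the coordinate $x_1$ equals the parameter $p$ independently of $c$; only $c_1'$ picks up the constant, becoming $0<x_1\le c$.
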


\begin{lemma}\label{sing_points_out_S}
 For $a\in (0,1/2)$ the following assertions hold
for the singular points  $\bold{o_1},\bold{o_2},\bold{o_3}$ and $\bold{o_0}$ of the system~\eqref{three_equat}
given in~\eqref{ququ}:~
$\bold{o_0}\in \Sigma\cap S$ and
\begin{itemize}
 \item [(1)]
If $a\in (0,3/14)$  then $\bold{o_1}, \bold{o_2}, \bold{o_3} \in \Sigma\cap \operatorname{ext}(S)$;
\item [(2)]
 If   $a=3/14$ then $\bold{o_1}, \bold{o_2}, \bold{o_3}\in  \Sigma\cap \partial(S)$,
 moreover, $\bold{o_1}\in \Sigma\cap \Gamma_1$, $\bold{o_2}\in\Sigma\cap \Gamma_2$, $\bold{o_3}\in \Sigma\cap \Gamma_3$.
 \item [(3)]
If $a\in (3/14, 1/2)$ then $\bold{o_1}, \bold{o_2}, \bold{o_3} \in \Sigma\cap S$, moreover
  $\bold{o_1}=\bold{o_2}=\bold{o_3}=\bold{o_0} \in \Sigma\cap S$ if and only if $a=1/4$.
 \end{itemize}
\end{lemma}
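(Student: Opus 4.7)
The plan is to verify everything by direct substitution into the defining quantities $\gamma_i$ of Lemma~\ref{Valiev_set} and then read off the position of each $\bold{o}_k$ relative to $\overline{S}$, $\partial(S)$, and $\operatorname{ext}(S)$ using the characterization from the proof of Lemma~\ref{Valiev_set} that $\gamma_i>0$ is equivalent to $0<x_i<\Phi(x_j,x_k)$, so that a point lies in $S$ iff all three $\gamma_i$ are positive, on $\Gamma_i$ iff $\gamma_i=0$ (and the others remain positive), and in $\operatorname{ext}(S)$ as soon as one $\gamma_i<0$. The fact that each $\bold{o}_k$ lies on $\Sigma$ is an immediate calculation: for $\bold{o}_0$ we have $x_1x_2x_3=c$ trivially, and for $\bold{o}_i$ (say $i=1$) we get $x_1x_2x_3=q\kappa\cdot q\cdot q=q^3\kappa=c\kappa^{-1}\kappa=c$.

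The main computation is the evaluation of $\gamma_i$ at the candidate equilibria. For $\bold{o}_0=(\sqrt[3]{c},\sqrt[3]{c},\sqrt[3]{c})$ one finds $\gamma_1=\gamma_2=\gamma_3=c^{2/3}>0$, placing $\bold{o}_0$ in the closure of $S$ (and in $S$ for $a\neq 1/4$, where it is distinct from the excluded diagonal representatives of the other equilibria). For $\bold{o}_1=(q\kappa,q,q)$, a short expansion gives
\begin{equation*}
\gamma_1(\bold{o}_1)=q^2\kappa(4-3\kappa), \qquad \gamma_2(\bold{o}_1)=\gamma_3(\bold{o}_1)=q^2\kappa^{2},
\end{equation*}
so $\gamma_2$ and $\gamma_3$ are strictly positive on $\bold{o}_1$ for every $a\in(0,1/2)$, and the sign of $\gamma_1(\bold{o}_1)$ reduces to the sign of $4-3\kappa$. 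Substituting $\kappa=(1-2a)/(2a)$ yields $4-3\kappa=(14a-3)/(2a)$, whence $\gamma_1(\bold{o}_1)$ is negative, zero, or positive according as $a<3/14$, $a=3/14$, or $a>3/14$.

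Reading off the trichotomy: for $a\in(0,3/14)$ one has $\gamma_1(\bold{o}_1)<0$, so $\bold{o}_1\in\Sigma\cap\operatorname{ext}(S)$; for $a=3/14$ one has $\gamma_1(\bold{o}_1)=0$ with $\gamma_2,\gamma_3>0$, so $\bold{o}_1\in\Sigma\cap\Gamma_1\subset\Sigma\cap\partial(S)$; for $a\in(3/14,1/2)$ all three $\gamma_i$ are positive at $\bold{o}_1$, so $\bold{o}_1\in\Sigma\cap S$. The conclusions for $\bold{o}_2$ and $\bold{o}_3$ are obtained verbatim using the invariance of $\gamma_i$ under the cyclic permutation $i\to j\to k\to i$ noted before Lemma~\ref{Valiev_set}, which matches the analogous cyclic relabelling of the coordinates of $\bold{o}_i$. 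Finally, the coincidence condition is clear from the definitions in~\eqref{ququ}: $\bold{o}_1=\bold{o}_0$ iff $q\kappa=q$ iff $\kappa=1$ iff $a=1/4$, and the same holds for $\bold{o}_2,\bold{o}_3$.

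No step is genuinely hard; the only point deserving caution is the role of the excluded diagonal $\{(r,r,r)\}$ in the definition of $S$, which needs to be kept track of when declaring $\bold{o}_0\in\Sigma\cap S$ at $a\neq 1/4$ and when identifying the three coinciding equilibria with $\bold{o}_0$ in the case $a=1/4$; otherwise everything is an elementary algebraic verification together with the explicit formula $\kappa=(1-2a)/(2a)$.
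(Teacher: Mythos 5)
Your proof is correct and takes essentially the same route as the paper's: the paper evaluates $x_i-\Phi(x_j,x_k)=\frac{(3-14a)\tau}{6a}$ along the line $x_i=\tau\kappa$, $x_j=x_k=\tau$, which is exactly your sign analysis of $\gamma_1(\bold{o_1})=q^2\kappa(4-3\kappa)$ with $4-3\kappa=(14a-3)/(2a)$, followed by the same symmetry and $\kappa=1\Leftrightarrow a=1/4$ observations. If anything you are slightly more explicit, since you verify $\gamma_j=\gamma_k=q^2\kappa^2>0$ directly (the paper leaves this implicit) and you flag the tension with the excluded diagonal in the definition of $S$, a point the paper's own claim $\bold{o_0}\in\Sigma\cap S$ also glosses over.
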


\begin{proof}
All  inequalities $\gamma_i=(x_j-x_k)^2+2x_i(x_j+x_k)-3x_i^2>0$ defining the set $S$ hold   for $(x_1,x_2,x_3)=(\tau,\tau,\tau)$ trivially, and hence
$e_0\subset S$. In particular, $\bold{o_0}=(1,1,1)\in \Sigma\cap S$.

Due to symmetry under the permutations $i\to j\to k\to i$
it suffices to consider the  straight line
$x_i=\tau \kappa$, $x_j=x_k=\tau>0$ denoted by $e_i$ and
given in~\eqref{lines_equilib} to study location of $\bold{o_1},\bold{o_2}$ and~$\bold{o_3}$.
Note that $\Gamma_i\subset \partial(S)$  by Lemma~\ref{Valiev_set}
and  $\Gamma_i$ is defined by the equation $x_i=\Phi(x_j,x_k)$ as in~\eqref{Surface_Phi}.
Substituting $x_i=\tau \kappa$, $x_j=x_k=\tau$
into
$x_i-\Phi(x_j,x_k)$ yields
\begin{equation}\label{signs_of_gamma}
x_i-\Phi(x_j,x_k)= x_i-\frac{x_j+x_k+ 2\sqrt{x_j^2-x_jx_k+x_k^2}}{3}=
\frac{(1-2a)\,\tau}{2a}-\frac{4\tau}{3}=\frac{(3-14a)\,\tau}{6a}.
\end{equation}

\smallskip

$(1)$ {\it The case  $a\in (0,3/14)$}.
Actually,  we can prove  a more general assertion that
no point of~$e_i$   can  belong to
$\overline{S}=S\cup \partial(S)$ for all $\tau>0$.
Indeed $x_i-\Phi(x_j,x_k)>0$ if  $0<a<3/14$ and  $\tau>0$
meaning that  every point of $e_i$ satisfies $\gamma_3<0$,
in other words,
 belongs to  $\operatorname{ext}(S)$.
Since $e_1,e_2,e_3\subset \operatorname{ext}(S)$ for all $\tau>0$
then, in particular,
$\bold{o_1},\bold{o_2},\bold{o_3}\in \Sigma\cap \operatorname{ext}(S)$
 corresponding to the fixed value $\tau=q:=\sqrt[3]{c\kappa^{-1}}$
(see the right panel of the mentioned Figure~\ref{Gammas0}).

\smallskip

$(2), (3)$ {\it The case $a\in [3/14, 1/2)$}.
It follows from \eqref{signs_of_gamma} immediately that
$e_1, e_2, e_3\subset \Gamma_3$ if $a=3/14$ and $e_1,e_2,e_3\subset S$ else.
Therefore $\bold{o_1}, \bold{o_2}, \bold{o_3} \in \Sigma\cap S$ at $a\in (3/14, 1/2)$
and $\bold{o_1}\in \Sigma\cap \Gamma_1$, $\bold{o_2}\in\Sigma\cap \Gamma_2$, $\bold{o_3}\in \Sigma\cap \Gamma_3$
at $a=3/14$.
Obviously, the lines $e_1, e_2$ and $e_3$ must coincide with $e_0$
at~$a=1/4$ and, hence, belong to $S$
implying $\bold{o_1}=\bold{o_2}=\bold{o_3}=\bold{o_0}\in \Sigma\cap S$.
Lemma~\ref{sing_points_out_S}   is proved.
\end{proof}

\subsection{The vector field  $\bold{V}$ and the normal $\nabla \gamma_i$ of the cone  $\Gamma_i$}
We also need to consider
the standard inner product
$$
(\bold{V}, \nabla \gamma_i)=f_1\frac{\partial \gamma_i}{\partial x_1}+f_2\frac{\partial \gamma_i}{\partial x_2}+f_3\frac{\partial \gamma_i}{\partial x_3}
$$
on the arbitrary cone~$\Gamma_i$,
where  $\bold{V}=(f_1,f_2,f_3)$ is a vector field associated with the system \eqref{three_equat} and
$\nabla \gamma_i:=\left(\tfrac{\partial \gamma_i}{\partial x_1}, \tfrac{\partial \gamma_i}{\partial x_2}, \tfrac{\partial \gamma_i}{\partial x_3}\right)
$ is the normal vector of the surface $\Gamma_i$, $i=1,2,3$.

\begin{lemma}\label{normal_inside}
On every point of the  boundary $\partial(S)$ of the domain $S$
the normal vector is directed inside $S$.
\end{lemma}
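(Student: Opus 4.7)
The plan is to exploit the pairwise disjointness of the cones $\Gamma_1,\Gamma_2,\Gamma_3$, already established in Lemma~\ref{Valiev_set}, in order to localize the problem: near each boundary point only one of the three defining inequalities of $S$ is tight, so the claim reduces to the elementary fact that the gradient of a smooth function points in the direction of its increase.

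More precisely, I would fix an arbitrary $\bold{x}_0\in \partial(S)$. By Lemma~\ref{Valiev_set}, $\bold{x}_0$ lies on exactly one of the three pairwise disjoint cones, say $\Gamma_i$, so that $\gamma_i(\bold{x}_0)=0$ while $\gamma_j(\bold{x}_0)>0$ and $\gamma_k(\bold{x}_0)>0$ for the complementary indices $\{j,k\}=\{1,2,3\}\setminus\{i\}$. By continuity of the polynomials $\gamma_j$ and $\gamma_k$, these strict inequalities persist on an open neighborhood $U$ of $\bold{x}_0$, so
\[
S\cap U=\{\bold{x}\in U:\gamma_i(\bold{x})>0\}.
\]
Thus, locally near $\bold{x}_0$, the set $S$ is described by the single inequality $\gamma_i>0$, and moving from $\bold{x}_0$ in the direction of $\nabla\gamma_i(\bold{x}_0)$ strictly increases $\gamma_i$ from the boundary value $0$. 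Hence $\nabla\gamma_i(\bold{x}_0)$ points into $S$, which is exactly the claim.

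The only point that still calls for explicit computation is the non-vanishing of $\nabla\gamma_i$ on $\Gamma_i$, for otherwise the previous sentence would be vacuous. Using $\partial\gamma_i/\partial x_i=2(x_j+x_k)-6x_i$ together with the explicit description $x_i=\Phi(x_j,x_k)$ of $\Gamma_i$ from~\eqref{Surface_Phi}, a direct substitution gives
\[
\left.\frac{\partial\gamma_i}{\partial x_i}\right|_{\Gamma_i}=-4\sqrt{x_j^2-x_jx_k+x_k^2}<0 \qquad \text{for all } (x_j,x_k)\in (0,+\infty)^2,
\]
so $\nabla\gamma_i$ is nowhere zero on $\Gamma_i$. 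This short sign calculation is the main (and essentially only) obstacle; the disjointness of the three cones, already at our disposal from Lemma~\ref{Valiev_set}, takes care of the rest by reducing the problem to a one-inequality situation at each point of $\partial(S)$.
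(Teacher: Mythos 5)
Your proof is correct and follows essentially the same route as the paper: both reduce to a single cone $\Gamma_i$, establish that $\partial\gamma_i/\partial x_i<0$ there, and conclude that $\nabla\gamma_i$ points into $S$. Your direct substitution of $x_i=\Phi(x_j,x_k)$ giving $\left.\partial\gamma_i/\partial x_i\right|_{\Gamma_i}=-4\sqrt{x_j^2-x_jx_k+x_k^2}$ is a slightly cleaner way to obtain this sign than the paper's intermediate inequality $x_i>\max(x_j,x_k)$, and your explicit use of the disjointness of the cones to localize $S$ to the single condition $\gamma_i>0$ makes the concluding step more transparent.
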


\begin{proof}
Due to  symmetry it suffices to take the conic surface
$\Gamma_i\subset \partial(S)$  defined by
$x_i=\Phi(x_j,x_k)$  in Lemma~\ref{Valiev_set}.
Observe that $x_i> \max(x_j,x_k)$ for  every point
$(x_j,x_i,x_k)\in \Gamma_i$.

Indeed, suppose $x_j\ge x_k$ without loss of generality.
Then $x_i> x_j$ is equivalent to the irrational inequality
$2\sqrt{x_j^2-x_jx_k+x_j^2}> 2x_j-x_k$
and can easily be justified  by squaring both sides
because   $2x_j-x_k=x_j+(x_j-x_k)>0$.
Since $\dfrac{\partial \gamma_i}{\partial x_i}= 2(x_j+x_k)-6x_i$
the inequality   $x_i > \max(x_j,x_k)$ implies
$$
\dfrac{\partial \gamma_i}{\partial x_i}=2(x_j-x_i)-2x_i+2(x_k-x_i)<0.
$$

This means that  the normal vector
$\nabla \gamma_i:=\left(\tfrac{\partial \gamma_i}{\partial x_1}, \tfrac{\partial \gamma_i}{\partial x_2}, \tfrac{\partial \gamma_i}{\partial x_3}\right)
$
of the surface $\Gamma_i$  is directed inside the domain~$S$ on the part of its border $\Gamma_i\subset \partial(S)$.
The same property can be proved for other components $\Gamma_j$ and $\Gamma_k$ of the
set $\partial(S)$.
Lemma~\ref{normal_inside} is proved.
\end{proof}

\medskip

By direct calculations we can obtain the following
expression for $(\bold{V}, \nabla \gamma_i)$:
\begin{multline}\label{eqH}
(\bold{V}, \nabla \gamma_i)=\frac{2}{3}\,
\big[-4a\big(-3x_i^4+x_j^4+x_k^4\big)-4a\big(x_i^2(x_j^2+x_k^2)-x_j^2x_k^2\big)\\
+4a(-x_i+x_j+x_k)
+2a\big(x_j^3x_k+x_k^3x_j-x_i(x_j^3+x_k^3)-x_i^3(x_j+x_k)\big)+ x_i(x_j^3+x_k^3) \\+x_j^3x_k+x_k^3x_j
+2\big(x_i^2(x_j^2+x_k^2)-x_j^2x_k^2\big)
-2\big(-2x_i+x_j+x_k\big)-3x_i^3(x_j+x_k)\big].
\end{multline}

By homogeneity of the equation $\gamma_i=0$
 every point
of the cone~$\Gamma_i$ can be represented as a point
\begin{equation}\label{numu}
x_i=\nu t, \quad  x_j=\mu t, \quad  x_k=t, \quad t>0
\end{equation}
of some straight line,
where    $\nu>1$ and $\mu=1-\nu+2\sqrt{\nu(\nu-1)}$ according to Lemma~\ref{Valiev_set}.
Observe that $\mu>0$.
Substituting    \eqref{numu} into  \eqref{eqH}
we obtain
\begin{equation}\label{expres_W}
(\bold{V}, \nabla \gamma_i)=12\,t\,\nu(\nu-1)\big(G(\nu)a-H(\nu)\big),
\end{equation}
where~ $G(\nu)=(12\nu-15)\sqrt{\nu(\nu-1)}-12\nu^2+21\nu-5$ and
$H(\nu)=2^{-1}\left(3\nu-1-3\sqrt{\nu(\nu-1)}\right)$.

\begin{lemma}\label{function_F}
At   $\nu\ge 1$ the following assertions hold for the function $F(\nu):=H(\nu)/G(\nu)$:
\begin{itemize}
\item[(1)]
The function $F(\nu)$ is monotonic on each of
intervals $(1,4/3)$ (decreases) and  $(4/3, +\infty)$ (increases), satisfies
$\lim\limits_{\nu \to +\infty} F(\nu)=1/4$ and
achieves its biggest and smallest  values $1/4$  and $3/14$
at $\nu=1$ and $\nu=4/3$ respectively
(the graph of $F(\nu)$ is depicted in  the left panel of~Figure~\ref{graph_F}).

\item [(2)]
For $a\in (3/14,1/4)$ the equation $F(\nu)=a$ admits exactly two distinct real roots $\nu_1=\nu_1(a)>1$ and $\nu_2=\nu_2(a)>1$ given by the formula
(see the right panel of Figure~\ref{graph_F} for their graphs):
\begin{equation}\label{roots_nu}
\nu_{1,2}(a)=\frac{3\,(2a-1)(10a-1)\pm \sqrt{3\,(14a-3)(10a-1)^3}}{48a\,(4a-1)}.
\end{equation}
In addition,\,  $\nu_1(a)= \nu_2(a)=4/3$ at $a=3/14$ (illustrated by a point $K$),
$\nu_1(a)< \nu_2(a)$ for $a\in (3/14,1/4)$
 and\,
$\lim_{a\to 1/4-0}\nu_1(a)=1$, $\lim_{a\to 1/4-0}\nu_2(a)=+\infty$.
\end{itemize}
\end{lemma}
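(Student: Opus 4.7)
The plan is to rationalise the square roots in $H(\nu)$ and $G(\nu)$ via the substitution $\nu = 1/(1-r^2)$ with $r\in[0,1)$, which parametrises $[1,+\infty)$ monotonically and yields $\sqrt{\nu(\nu-1)}=r/(1-r^2)$. I expect that after multiplying through by appropriate powers of $(1-r^2)$, factoring the resulting polynomials in $r$, and cancelling the common factors of $(1-r)$ in $H$ and $(1-r)^2$ in $G$, both quantities should collapse to
\[
H(\nu)=\frac{2-r}{2(1+r)},\qquad G(\nu)=\frac{4+5r-5r^2}{(1+r)^2},\qquad F(\nu)=\frac{2+r-r^2}{2(4+5r-5r^2)}.
\]
This is the central algebraic simplification; the boundary values $F(1)=1/4$, $F(4/3)=3/14$, and $\lim_{\nu\to+\infty}F(\nu)=1/4$ then follow at once by inserting $r=0$, $r=1/2$, and letting $r\to 1^{-}$.

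For assertion (1), I will differentiate the resulting ratio of quadratics in $r$. A brief computation should give
\[
\frac{d}{dr}\!\left(\frac{2+r-r^2}{2(4+5r-5r^2)}\right)=\frac{-6(1-2r)}{2(4+5r-5r^2)^2},
\]
whose denominator is strictly positive on $[0,1)$ since $4+5r-5r^2$ equals $4$ at both endpoints and stays above $4$ in between. Combined with $dr/d\nu>0$ on $(1,+\infty)$, this places the unique critical point of $F$ at $\nu=4/3$, giving strict decrease on $(1,4/3)$, strict increase on $(4/3,+\infty)$, minimum value $3/14$ at $\nu=4/3$, and supremum $1/4$ attained at $\nu=1$ and approached as $\nu\to+\infty$.

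For assertion (2), the equation $F(\nu)=a$ reduces to the quadratic in $r$
\[
(10a-1)\,r(r-1)=2(4a-1),
\]
whose roots are $r_{1,2}(a)=\tfrac{1}{2}\bigl(1\mp\sqrt{3(14a-3)/(10a-1)}\bigr)$. A direct check (monotonicity of the radicand in $a$) shows that it lies strictly between $0$ and $1$ precisely for $a\in(3/14,1/4)$, so $r_1\in(0,1/2)$ and $r_2\in(1/2,1)$, matching the two monotonicity intervals of~(1) and placing $\nu_1\in(1,4/3)$, $\nu_2\in(4/3,+\infty)$. The coincidence $\nu_1=\nu_2=4/3$ at $a=3/14$ and the limits $\nu_1\to 1$, $\nu_2\to+\infty$ as $a\to 1/4^{-}$ correspond to the radicand reaching $0$ and $1$ respectively. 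Inverting the substitution through $\nu=1/(1-r^2)=(10a-1)/\bigl(2a+1-r(10a-1)\bigr)$ and rationalising the denominator by multiplying by its conjugate will then produce the closed form~\eqref{roots_nu}, with the denominator collapsing to $-96a(4a-1)$ and the radical absorbing the surviving $(10a-1)$ to become $\sqrt{3(14a-3)(10a-1)^3}$. The main obstacle I anticipate is spotting and carrying out the substitution $\nu=1/(1-r^2)$: once $F$ has been reduced to a ratio of quadratics in~$r$, everything else is routine calculus plus one conjugate rationalisation.
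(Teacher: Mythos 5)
Your proposal is correct — I checked the key identities and they all hold: with $\nu=1/(1-r^2)$ one indeed gets $H=\tfrac{2-r}{2(1+r)}$ (the numerator of $H$ is $\tfrac{1}{2}(1-r)(2-r)$ against $(1-r)(1+r)$), the numerator of $G$ factors as $(1-r)^2(4+5r-5r^2)$, the derivative identity $D-5N=-6$ gives $F'=\tfrac{(1-2r)(D-5N)}{2D^2}$ with the stated sign, $F(\nu)=a$ collapses to $(10a-1)r(r-1)=2(4a-1)$ with radicand $\tfrac{3(14a-3)}{10a-1}$ increasing from $0$ to $1$ on $(3/14,1/4)$, and the conjugate rationalisation of $\nu=\tfrac{10a-1}{2a+1-(10a-1)r}$ reproduces \eqref{roots_nu} exactly (denominator $9(2a-1)^2-3(14a-3)(10a-1)=-96a(4a-1)$). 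However, your route is genuinely different from the paper's. The paper works directly with the irrational expressions: it proves $G,H>0$ by showing the squared equations $G=0$, $H=0$ have no admissible roots; it locates the critical point of $F$ by squaring the irrational equation $F'(\nu)=0$ to get $\nu(3\nu-4)=0$ and applies a second-derivative test $F''(4/3)=27/392>0$; and it solves $F=a$ by writing $G^2a^2-H^2=M-N\sqrt{\nu(\nu-1)}=0$ and squaring again, which produces a quartic splitting into two quadratics and forces an explicit discussion of which factor generates extraneous roots. Your substitution buys two things: the sign of $F'$ is obtained globally rather than just the location of the unique critical point (making the monotonicity claim on each interval fully transparent), and the equation $F=a$ becomes a genuine quadratic in $r$ with no squaring step, so no extraneous-root bookkeeping is needed. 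The paper's computation is more direct in that it never leaves the variable $\nu$ and does not require spotting the parametrisation, but at the cost of two L'H\^opital limits and the extraneous-root analysis, both of which your version renders trivial.
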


\begin{proof}
It is clear that $H(\nu)>0$ and $G(\nu)>0$ for all $\nu\ge 1$.
Indeed $H(\nu)=0$ implies the equation
$9\nu(\nu-1)=(3\nu-1)^2$ admitting a negative root $\nu=-1/3$ only.
Therefore $H(\nu)$ has no roots and hence preserves its sign on $\nu\ge 1$.
Since $H(1)=1>0$ then $H(\nu)>0$ for all $\nu\ge 1$.
Analogously squaring $G(\nu)=0$ we obtain the equation
$24\nu^2-15\nu-25=0$ no root of that can satisfy it.
Consequently,  $F(\nu)>0$ at  $\nu\ge 1$.

\smallskip

$(1)$ {\it The smallest and the biggest values of $F(\nu)$}.
The function $F(\nu)$ attains its  smallest value~$3/14$  at $\nu=4/3$.
Indeed the local extrema's necessary condition
$F'(\nu)=0$ is equivalent to the irrational equation
$(24\nu^2-20\nu+2)\sqrt{\nu(\nu-1)} = 24\nu^3-32\nu^2-9\nu$
which implies  $\nu(3\nu-4)=0$ after squaring.
Observe that the value  $\nu_0=4/3$ is the only zero of
the derivative $F'(\nu)$, whereas the second derivative $F''(\nu_0)=27/392>0$.
Therefore $\nu_0$ is a local minimum point for $F$.
Moreover, it provides the smallest value $2/14$ of $F(\nu)$ for all $\nu\ge 1$ since  $F(1)=1/4>3/14=F(\nu_0)$
and $\lim\limits_{\nu \to +\infty} F(\nu)=1/4$ due to
\begin{eqnarray*}
\lim_{\nu \to +\infty} H(\nu)&=&\{\,\mbox{replacing}~ \nu=y^{-1}\}=
\lim_{y \to 0}\frac{3-y-3\sqrt{1-y}}{2y}= \{\,\mbox{L'H\^{o}pital's rule}\,\}=1/4,\\
\lim_{\nu \to +\infty} G(\nu)&=&\lim_{y \to 0}\frac{(12-15y)\sqrt{1-y}-12+21y-5y^2}{y^2}= \{\,\mbox{L'H\^{o}pital's rule}\,\}=1.
\end{eqnarray*}

Clearly $1/4$ is the biggest value of $F(\nu)$  for $\nu\ge 1$.

\smallskip

$(2)$ {\it  Roots of the equation  $F(\nu)=a$}.
According to the proved  part of  Lemma~\ref{function_F}
 the equation $F(\nu)=a$ admits exactly two
distinct real roots $\nu_1=\nu_1(a)$ and $\nu_2=\nu_2(a)$, say $\nu_1<\nu_2$,  for every $a\in (3/14, 1/4)$.
We can find explicit expressions for them.
Since $G,H>0$ for $\nu>1$  the  irrational equation $F=a$
is equivalent to a new one $0=G^2a^2-H^2:=M-N\sqrt{\nu(\nu-1)}$,
where
$M:=(288\nu^4-1008\nu^3+1146\nu^2-435\nu+25)a^2-9\nu^2/2+15\nu/4-1/4$ and
$N:=(288\nu^3-864\nu^2+750\nu-150)a^2-9\nu/2+3/2$.
Raising $M=N\sqrt{\nu(\nu-1)}$
to second power we obtain in consequence
two quadratic equations
\begin{eqnarray*}
24a(4a+1)\nu^2-3(10a+1)(2a+1)\nu -(10a+1)^2&=&0, \\
24a(4a-1)\nu^2-3(10a-1)(2a-1)\nu -(10a-1)^2&=&0.
\end{eqnarray*}
Observe that  the first of them  generates extraneous roots for $Ga-H=0$.
Therefore $\nu_1(a)$ and $\nu_2(a)$ can satisfy the second one only and hence can be found by \eqref{roots_nu}.
Note that $\nu_1(a)$ is also extraneous if $a>1/4$ satisfying  $M+N\,\sqrt{\nu(\nu-1)}=0$.
 Lemma~\ref{function_F} is proved.
\end{proof}

\begin{figure}[h!]
\centering
\includegraphics[angle=0, width=0.45\textwidth]{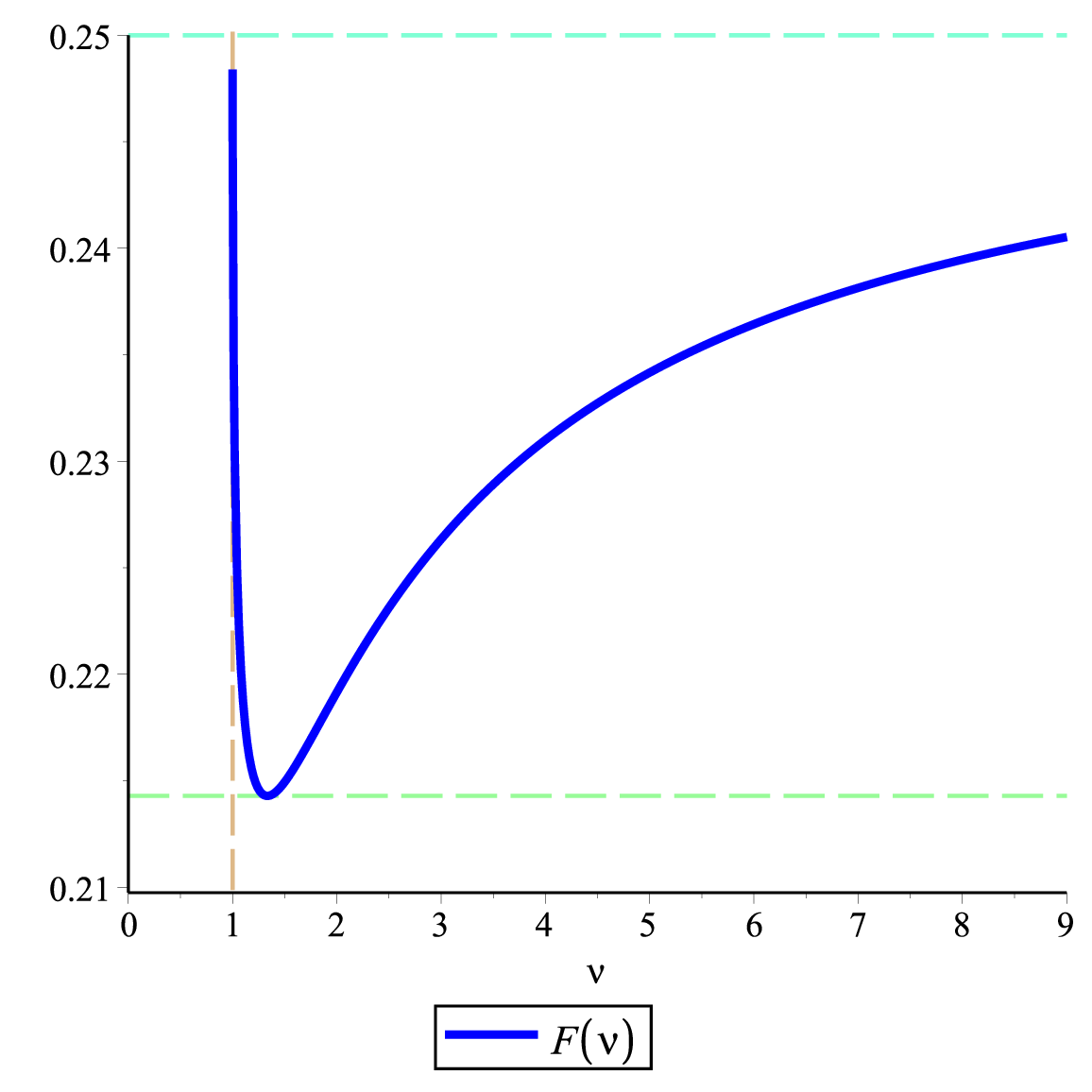}
\includegraphics[angle=0, width=0.45\textwidth]{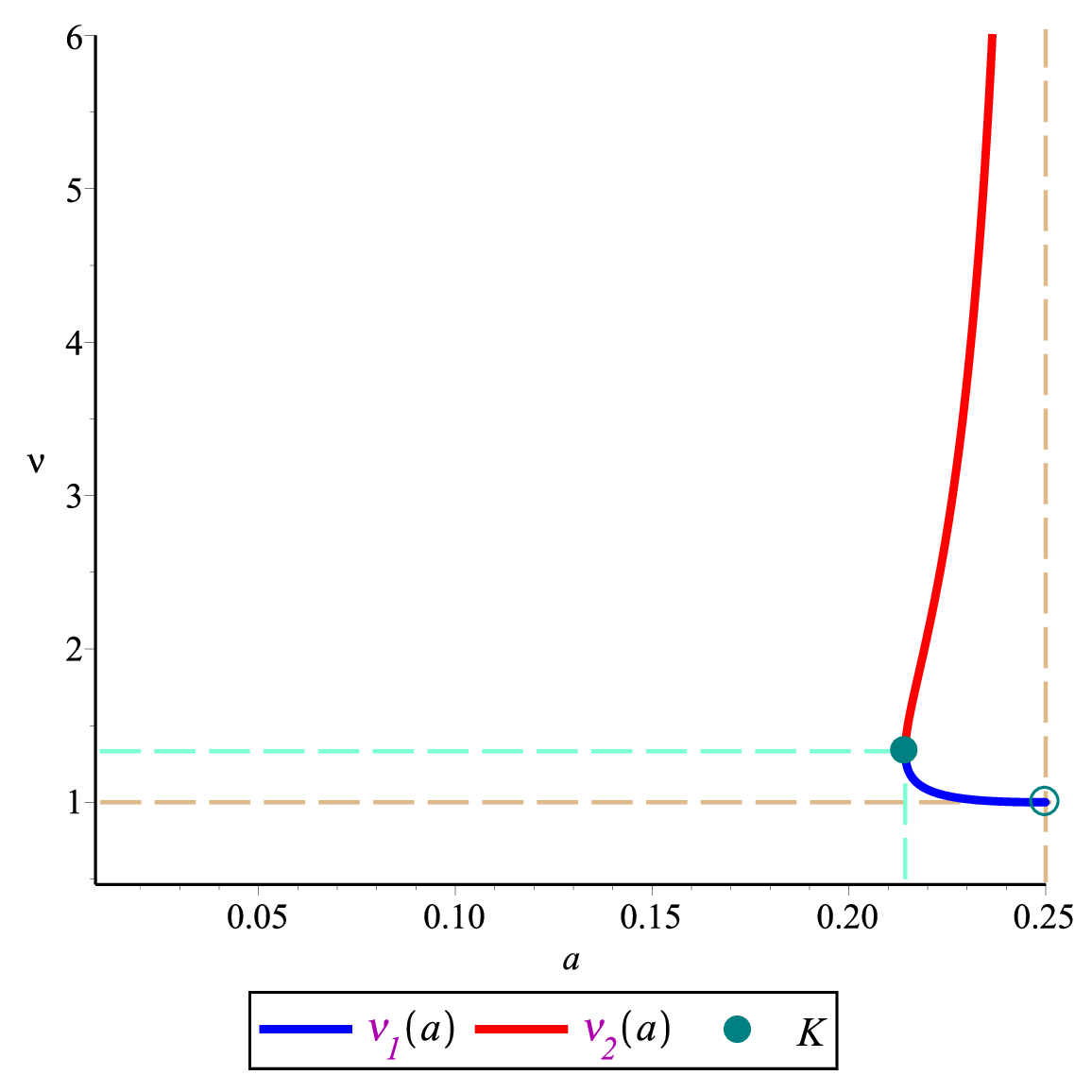}
\caption{The graphs of functions  $F(\nu)$,  $\nu_1(a)$ and $\nu_2(a)$.}
\label{graph_F}
\end{figure}

\subsection{Stable, unstable and center manifolds of singular points of~\eqref{three_equat}}

Denote by $\bold{J}_i$ the Jacobian matrix of~\eqref{three_equat}
 corresponding to its singular point~$\bold{o_i}$, $i=0,1,2,3$, and
use the standard notations $W_i^s$, $W_i^u$ and $W_i^c$ respectively  for the stable, unstable and
center manifolds of~$\bold{o_i}$.
Denote by $E_i^s$, $E_i^u$ and $E_i^c$ the corresponding eigenspaces
tangent to $W_i^s$, $W_i^u$ and $W_i^c$ at $\bold{o_i}$.

\begin{lemma}\label{lefolds}
The following assertions hold for the singular points \eqref{ququ}  of the system~\eqref{three_equat}:
\begin{enumerate}
\item The case $a<1/4$:
\begin{itemize}
\item[(a)]   $E_0^u=\operatorname{Span}\left\{(-1,0,1),(-1,1,0)\right\}$, $W_0^u=\Sigma$  and
$I_i'\subset W_0^u$ for each $i=1,2,3$;

\item[(b)]
$E_1^s=\operatorname{Span}\left\{(2a-1,a,a)\right\}$,
$E_2^s=\operatorname{Span}\left\{(a,2a-1, a)\right\}$,
$E_3^s=\operatorname{Span}\left\{(a,a,2a-1)\right\}$ and $W_i^s=I_i\setminus I_i'$,   $i=1,2,3$;

\item[(c)]

$E_1^u=\operatorname{Span}\left\{(0,-1,1)\right\}$,
$E_2^u=\operatorname{Span}\left\{(-1,0, 1)\right\}$,
 $E_3^u=\operatorname{Span}\left\{(-1, 1,0)\right\}$;

\end{itemize}

\item The case $a>1/4$:

\begin{itemize}
\item[(d)]   $E_0^s=\operatorname{Span}\left\{(-1,0,1),(-1,1,0)\right\}$,
$W_0^s=\Sigma$ and
$I_i\setminus I_i'\subset W_0^s$ for each $i=1,2,3$;

\item[(f)]
$E_1^u=\operatorname{Span}\left\{(2a-1,a,a)\right\}$,
$E_2^u=\operatorname{Span}\left\{(a,2a-1, a)\right\}$,
 $E_3^u=\operatorname{Span}\left\{(a,a,2a-1)\right\}$ and $W_i^u=I_i'$,   $i=1,2,3$;

\item[(g)]

$E_1^s=\operatorname{Span}\left\{(0,-1,1)\right\}$,
$E_2^s=\operatorname{Span}\left\{(-1,0, 1)\right\}$,
 $E_3^s=\operatorname{Span}\left\{(-1, 1,0)\right\}$;

\end{itemize}
\item
$E_0^c=\operatorname{Span}\left\{(1,1,1)\right\}$,
$E_1^c=\operatorname{Span}\left\{(\kappa,1,1)\right\}$,
$E_2^c=\operatorname{Span}\left\{(1,\kappa, 1)\right\}$,
 $E_3^c=\operatorname{Span}\left\{(1,1,\kappa)\right\}$ for all $a\in (0,1/2)\setminus \{1/4\}$.
\end{enumerate}
\end{lemma}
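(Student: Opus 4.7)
The plan is to compute the Jacobian $\mathbf{J}_i$ of the system~\eqref{three_equat} at each of the singular points of~\eqref{ququ}, diagonalize it explicitly, and then upgrade the resulting eigenspaces to stable, unstable and center manifolds by matching them with already-known invariant sets --- the line $e_i$ of equilibria, the curve $I_i$ from Lemma~\ref{I_i_curves}, and the surface $\Sigma$. Two preliminary observations shorten the bookkeeping: the field $\bold{V}=(f_1,f_2,f_3)$ is scale invariant, $f_i(\lambda x_1,\lambda x_2,\lambda x_3)=f_i(x_1,x_2,x_3)$, so I may set $q=1$ when analysing $\bold{o_1},\bold{o_2},\bold{o_3}$; and $\bold{V}$ is equivariant under permutations of $(x_1,x_2,x_3)$, so it suffices to treat $\bold{o_0}$ and a single representative (say $\bold{o_1}$) among the three other singular points.

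At $\bold{o_0}=(1,1,1)$ a routine calculation gives the symmetric circulant matrix $\mathbf{J}_0$ with diagonal entries $2-8a$ and off-diagonal entries $4a-1$, whose spectrum is $\{0,\,3(1-4a),\,3(1-4a)\}$. The simple eigenvalue $0$ is carried by $(1,1,1)$, tangent to the line $e_0$ of equilibria; the double eigenvalue $3(1-4a)$ is carried by the plane $\{v_1+v_2+v_3=0\}=\operatorname{Span}\{(-1,0,1),(-1,1,0)\}$, which is exactly the tangent plane to $\Sigma$ at $\bold{o_0}$. The sign of $1-4a$ then decides whether this $2$-plane is $E_0^u$ (case $a<1/4$) or $E_0^s$ (case $a>1/4$), producing (1a), (2d) and the $\bold{o_0}$ half of (3).

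At $\bold{o_1}=(\kappa,1,1)$ the residual symmetry $x_2\leftrightarrow x_3$ fixes $\bold{o_1}$ and makes $\mathbf{J}_1$ block diagonal on the symmetric subspace ($v_2=v_3$) and the antisymmetric one ($v_1=0,\,v_3=-v_2$). Two eigenvectors in the symmetric block are identified geometrically: $(\kappa,1,1)$ is tangent to the line $e_1$ of equilibria (hence zero eigenvalue, giving $E_1^c$), and the tangent to the invariant curve $I_1$ at $\bold{o_1}$, obtained from the parametrization $(cp^{-2},p,p)$ together with $c/q^3=\kappa$, equals $a^{-1}(2a-1,a,a)$. Its eigenvalue is read off from the 1-D reduction in the proof of Lemma~\ref{I_i_curves}: differentiating $(t^3-c)\bigl((1-2a)t^3-2ac\bigr)/(3ct^3)$ at $t=q$ and using the identity $(1-2a)q^3=2ac$ to cancel one term yields the eigenvalue $(4a-1)/q$, whose sign is that of $4a-1$. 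The single antisymmetric eigenvalue, computed by substituting $(0,-1,1)^T$ into $\mathbf{J}_1$, equals $(1-4a)(6a+1)/(1-2a)$, whose sign is that of $1-4a$ since $6a+1$ and $1-2a$ are positive on $(0,1/2)$. Both hyperbolic eigenvalues therefore flip sign at $a=1/4$, yielding (1b), (1c), (2f), (2g) and the $\bold{o_1}$ half of (3); the statements at $\bold{o_2},\bold{o_3}$ follow by cyclic permutation.

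To pass from eigenspaces to manifolds, I would use the stable/unstable/center manifold theorem together with uniqueness of smooth invariant manifolds tangent to a prescribed hyperbolic subspace. Since $\Sigma$ is a smooth $2$-dim invariant manifold through $\bold{o_0}$ with tangent space $E_0^{u/s}$, it must coincide with $W_0^{u/s}$; likewise each $1$-dim smooth invariant curve $I_i$ tangent to the hyperbolic eigenvector at $\bold{o_i}$ contains the hyperbolic $1$-dim manifold of $\bold{o_i}$. The explicit 1-D flow along $I_i$ from Lemma~\ref{I_i_curves} then partitions $I_i$ into the portion forward-convergent to $\bold{o_i}$ (which is $W_i^s$ for $a<1/4$ and $W_i^u$ for $a>1/4$) and the complementary portion, and this partition is precisely $(I_i\setminus I_i')$ versus $I_i'$. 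The inclusions $I_i'\subset W_0^u$ and $I_i\setminus I_i'\subset W_0^s$ follow from $I_i\subset\Sigma$. The main obstacle will be the \emph{global} identification $W_0^{u/s}=\Sigma$: the stable manifold theorem only supplies a local match near $\bold{o_0}$, and propagating this over all of $\Sigma$ requires analysing the induced planar flow on $\Sigma$, locating its $\omega$- and $\alpha$-limit sets, and ruling out unexpected invariant curves --- the most delicate part of the argument.
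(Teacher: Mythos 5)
Your proposal follows essentially the same route as the paper's proof: compute the Jacobian matrices $\mathbf{J}_i$, identify the eigenvectors with the tangents to the equilibrium lines $e_i$, the invariant curves $I_i$ and the invariant surface $\Sigma$, observe that both nonzero eigenvalues change sign exactly at $a=1/4$, and then upgrade eigenspaces to the stated stable/unstable/center manifolds by matching them with these known invariant sets (the paper is in fact terser than you and simply asserts $W_0^{u/s}=\Sigma$ and $W_i^{s/u}\subset I_i$ without the global argument on $\Sigma$ that you rightly flag as the delicate step). The only blemishes are harmless constant errors in the nonzero eigenvalues --- e.g.\ a direct computation at the normalized point $(\kappa,1,1)$ gives the antisymmetric eigenvalue $3(1-4a)(1+2a)/(1-2a)$ rather than your $(1-4a)(6a+1)/(1-2a)$, and the reduction in Lemma~2 that you differentiate is itself off by a factor of $3$ --- none of which affects any sign and hence none of the conclusions of the lemma.
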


\begin{proof}
Eigenvectors $(-1,0,1)$, $(-1,1,0)$
correspond to the eigenvalue  $-\frac{4a-1}{q}$
 of the  Jacobian matrix $\bold{J}_0$ of multiplicity $2$.
 For every $c>0$ this vectors define the plane
$x_1+x_2+x_3-3\sqrt[3]{c}=0$ tangent to $\Sigma$
at $\bold{o_0}$. The property of this plane  to be stable or not, of course,
depends on the sign of $4a-1$.
The third eigenvalue of $\bold{J}_0$ equals $0$  and the corresponding eigenvector is
$(1,1,1)$ spanning the line  $x_1=x_2=x_3=t$.

For $i=1,2,3$ the matrix $\bold{J}_i$
has eigenvalues  $\frac{4a-1}{q}$,
$\frac{(4a-1)(2a+1)}{(2a-1)q}$ and $0$.
The eigenvectors corresponding to nonzero eigenvalues
span straight lines
$x_i=q\kappa +(2a-1)t$, $x_j=x_k=at$ and $x_i=0$, $x_j=-x_k=t$
respectively tangent to the stable (unstable) and the unstable (stable) manifolds
of~$\bold{o_i}$ if $a<1/4$ (if $a>1/4$).
The slow manifold of $\bold{o_i}$  has the tangent $x_i=\kappa t$, $x_j=x_k=t$
(all slow eigenspaces $E_i^c$, $i=0,\dots, 4$,  are demonstrated in green color
in the left panel of Figure~\ref{Mani_sing} for $a=1/6$, $c=1$).

In addition,  $I_i\setminus I_i'=W_i^s$ and $I_i'\subset W_0^u$ if $a<1/4$
and $I_i\setminus I_i'\subset W_0^s$ and $I_i'=W_i^u$ if $a>1/4$
(the case $i=3$, $a=1/6$, $c=1$ is depicted in the right panel of Figure~\ref{Mani_sing}).
\end{proof}

\begin{figure}[h!]
\centering
\includegraphics[angle=0, width=0.45\textwidth]
{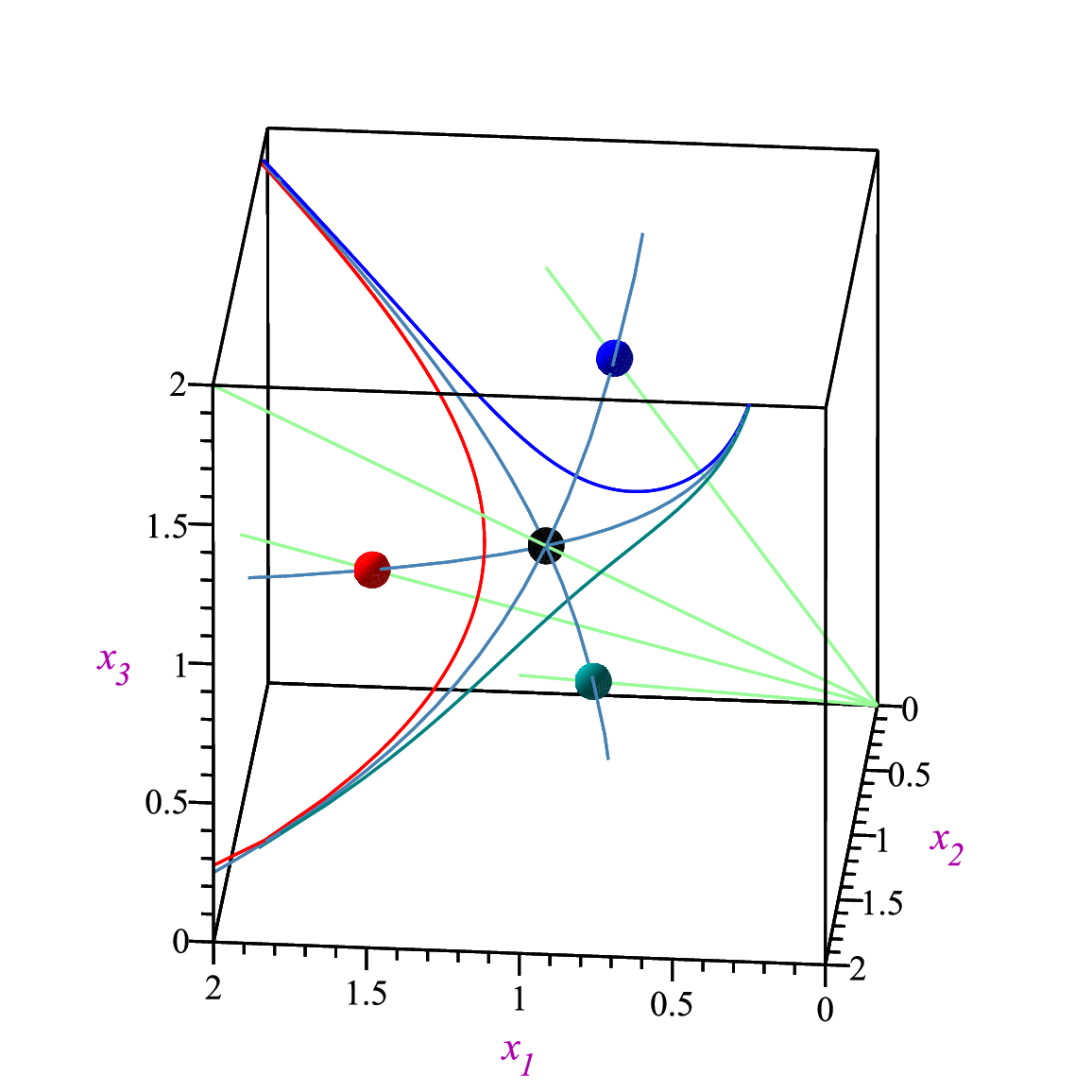}
\includegraphics[angle=0, width=0.45\textwidth]
{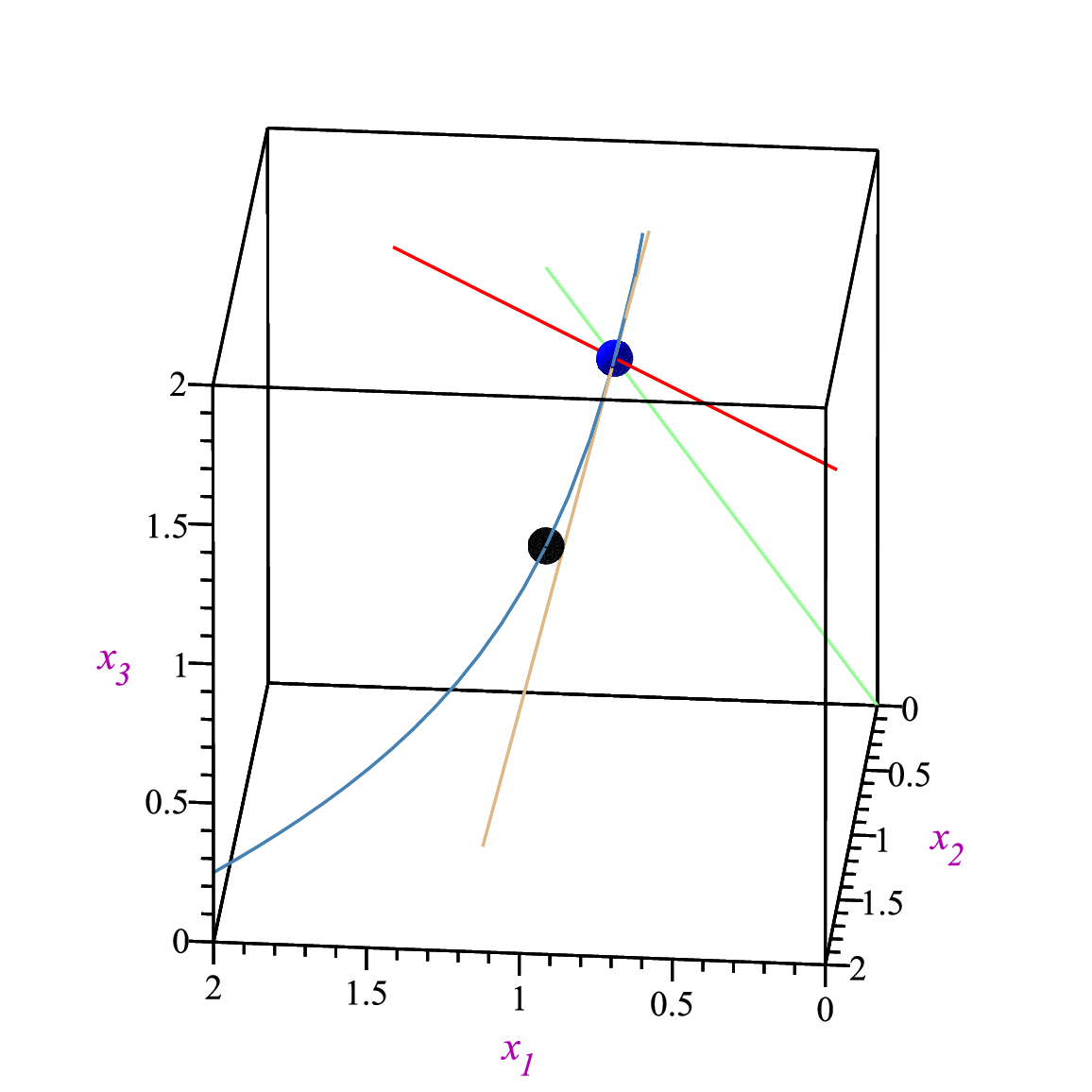}
\caption{The case $a=1/6$, $c=1$: the points $\bold{o_i}$ and $E_i^c$ (the green lines) for $i=0,1,2,3$ (the left panel); $W_3^s=I_3\setminus I_3'$ (the connected component in the steel blue curve originated at $\bold{o_0}$ and containing $\bold{o_3}$), $E_3^s$ (the burlywood straight line)
and $E_3^u$ (the red straight line) (the right panel).}
\label{Mani_sing}
\end{figure}

\begin{remark}\label{refolds}
Since all center manifolds are slow we conclude  that
the dominant motions of trajectories of~\eqref{three_equat}  towards or away
from the singular points $\bold{o_i}$ happen on their strong  stable and unstable manifolds
contained in the invariant set $\Sigma=\Sigma(c)$.
Thus  surfaces $x_1x_2x_3=c$ are most responsible for dynamics of~\eqref{three_equat}
at any fixed $c>0$.
However knowing all the corresponding  subspaces (straight lines) $E_i^s$, $E_i^u$
we do not have a complete information about
manifolds (curves) $W_i^s$, $W_i^u$ except for the invariant curves $I_i$.
According to the theory closeness of $E_i$ to $W_i$ possesses
the local character only nearby the points~$\bold{o_i}$.
Take, for example the case $a=1/8$ considered in~\cite{AN}.
Then trajectories in $S$ can be repelled out by~$\bold{o_0}$ and
attracted by some $\bold{o_i}$ due to influence of $W_i^s=I_i\setminus I_i'$
which intersects $\partial(S)$ transversally by Lemma~\ref{I_i_curves}.
However we do not know will $W_i^u$ \glqq help`` all trajectories leave $S$.
This would be so if the curve $W_i^u$ was outside of $S$ at least at infinity.
Otherwise some trajectories will stay in $S$.
We encountered this problem in~\cite{AN}  and proposed a way to circumvent such difficulties.
Proposition~1 in~\cite{AN} based on the asymptotic idea
allows answer the question will trajectories  ever be outside (or inside) of~$S$ and then stay
there forever or not
without knowing  equations of the curves~$W_i^u$.
Briefly  Proposition~1 asserts the following:
every trajectory of the system~\eqref{three_equat} originated in
$S\setminus I$  reaches the boundary $\partial(S)$   in finite time
and gets into $\operatorname{ext(S)}$ if $\frac{1-2a}{4a}>\frac{1}{2}$ and
respectively every trajectory of~\eqref{three_equat} originated in $\operatorname{ext(S)}\setminus I$
reaches the boundary~$\partial(S)$   in finite time
and gets into $S$ if  $\frac{1-2a}{4a}<\frac{1}{2}$.
That time can be large enough depending
on a chosen initial point.
\end{remark}

\section{Proof of Theorem~\ref{theo1}}

\begin{proof}[Proof of Theorem \ref{theo1}]
Throughout the text it will be assumed
that we are rely on the principal dynamic picture
described in Lemma~\ref{lefolds} avoiding multiple references to it without special need.

Possessing asymptotical character and basing on the idea
of intermediate value theorem
Proposition~1 in \cite{AN}  predicts existing of an intersection point
of a trajectory with~$\partial(S)$
but does not guarantee its uniqueness, in other words,
can not answer the question whether trajectories intersect or touch  $\partial(S)$  more than once.
To answer this question we have  to analyze  the sign of the
inner product~$(\bold{V}, \nabla \gamma_i)$ on each $\Gamma_i$.
But $\operatorname{sign}\big(\bold{V}, \nabla \gamma_i\big)=\operatorname{sign}\big(G(\nu)a-F(\nu)\big)$
for all $\nu>1$ and $t>0$ independently of $i$ as it follows from \eqref{expres_W}.  Hence
it suffices  to determine the sign of the expression $G(\nu)a-F(\nu)$ or equivalently the sign of $a-F(\nu)$ (since  $G(\nu)>0$ for all $\nu>1$ by Lemma~\ref{function_F}).

\smallskip

$(1)$ {\it  The case $a\in (0,3/14)$}.
We know that  trajectories are moving from
$S\setminus I$  into $\operatorname{ext(S)}$ for all $\frac{1-2a}{4a}>\frac{1}{2}$.
Our goal is to show that
every trajectory
of \eqref{three_equat} originated in $\overline{S}\setminus I'$ can intersect  $\partial(S)$
at most once and remains in $\operatorname{ext}(S)$ forever and
no trajectory of \eqref{three_equat} initiated in~$\operatorname{ext}(S)$ can come into the domain $S$.

We claim  that  $(\bold{V},  \nabla \gamma_i)<0$  on~$\Gamma_i$, where $i$ is any of $1,2$ or $3$.
Indeed  $a-F(\nu)< 0$ in~\eqref{expres_W} for all $\nu>1$.
Suppose by contrary that there exists a value
$\nu^{\ast}>1$ such that $a-F(\nu^{\ast})\ge 0$.
Then   it should be $F(\nu^{\ast})\le a<3/14$ contradicting  $\min_{\nu\ge 1} F(\nu)=3/14$
proved in Lemma~\ref{function_F}.
Therefore, $(\bold{V},  \nabla \gamma_i)< 0$ on~$\Gamma_i$.
This means that for all $0<a<\frac{3}{14}$
the vector field $V$ of system \eqref{three_equat}
and the normal vector $\nabla \gamma_i$ of the conic surface $\Gamma_i$
form an obtuse angle on every point of $\Gamma_i$.
Recall that the normal vector $\nabla \gamma_i$  is directed inside the domain~$S$ on the part of its boundary $\Gamma_i\subset \partial(S)$ according to  Lemma~\ref{normal_inside}.
These facts mean that the vector field $V$
is directed outside the domain~$S$ at every point of the cone $\Gamma_i$
being the part of its boundary $\partial(S)$.
The same property can be proved for other components $\Gamma_j$ and $\Gamma_k$ of the set $\partial(S)$.

Trajectories of system \eqref{three_equat} initiated in $S\setminus I$
will  cross the border $\partial(S)$ sooner or later according to Proposition~1 in~\cite{AN}.
All such trajectories
never return back to  $\overline{S}$  due to
$(\bold{V},  \nabla \gamma_i)< 0$ proved above for each~$i=1,2,3$.
By the same reason trajectories
initiated in $\operatorname{ext}(S)$ never can get $\overline{S}$.
Trajectories started from $\partial(S)$  remain in  $\operatorname{ext}(S)$
obviously.

In final, consider trajectories of system \eqref{three_equat} initiated
in $I\cap S$. Some of them can leave $S$.
Indeed for all $a\in (0,1/4)$ and $i=1,2,3$
the invariant set $I_i\setminus I_i'$ is the stable (attracting) manifold for the singular point~$\bold{o_i}$
by Lemma~\ref{lefolds}.
Therefore for each $i=1,2,3$
every trajectory of~\eqref{three_equat} initiated in $(I_i\setminus I_i')\cap S$
will leave~$S$ along $(I_i\setminus I_i')\cap S$
attracting by~$\bold{o_i}$ which is located outside of  $S$ by
Lemma~\ref{sing_points_out_S} and then crossing  $\Gamma_i\subset \partial(S)$
 by Lemma~\ref{I_i_curves}.
As for trajectories started in $I'\cap S$
they will remain in~$S$  by Lemma~\ref{I_i_curves}.

\smallskip

$(2)$ {\it The case $a=3/14$}. Note that
 $(\bold{V},  \nabla \gamma_i)\le 0$ on~$\Gamma_i$.
More precisely,
the equality $(\bold{V},  \nabla \gamma_i)=0$
equivalent to
$3/14-F(\nu)=0$ can be satisfied on~$\Gamma_i$
at the unique point $\nu=4/3$ for each $i$.
This yields $\mu=1-\nu+2\sqrt{\nu(\nu-1)}=1$ (see~\eqref{numu}).
Hence  $(\bold{V},  \nabla \gamma_i)$ could be equal to $0$
along the straight line $x_i=4t/3$, $x_j=x_k=t$, $t>0$, only.
By Lemma~\ref{I_i_curves} on the surface $\Sigma$ defined by the equation
$x_1x_2x_3=c$ the invariant curve $I_i$ meets $\Gamma_i$ at the unique point
$x_i=cp_0^{-2}$, $x_j=x_k=p_0=\sqrt[3]{6c}/2$.
Since  $cp_0^{-2}=4p_0/3$  that common point is the singular point $\bold{o_i}$ with coordinates
$x_i=q\kappa$, $x_j=x_k=q$,
where $\kappa=(1-2a)/(2a)=4/3$   and $q=\sqrt[3]{c\kappa^{-1}}=p_0$.
Hence by uniqueness no trajectory of~\eqref{three_equat}
can cross the cone $\Gamma_i$ through  the straight line $x_i=4t/3$, $x_j=x_k=t$.

Therefore, we can assume $(\bold{V},  \nabla \gamma_i)<0$ on~$\Gamma_i$
and hence  for trajectories of~\eqref{three_equat}
initiated in
$S\setminus I$, $\operatorname{ext}(S)$ and $\partial(S)$
the similar dynamics are expected as in the previous case $a\in (0,3/14)$.
But now every trajectory initiated in $I\cap S$
will stay in $I\cap S$
because by Lemma~\ref{sing_points_out_S} there is no singular point
of~\eqref{three_equat} outside $S$  which can be attract trajectories
from~$(I\setminus I')\cap S$ and
trajectories started in $I'\cap S$ are still in~$S$
by Lemma~\ref{I_i_curves}.

\smallskip

$(3)$ {\it  The case $a\in (3/14, 1/4)$.}
For~\eqref{three_equat} we claim the following:
some its trajectories started in $\operatorname{ext}(S)$
can  intersect  $\partial(S)$ twice getting into~$S$ and returning  back
to $\operatorname{ext}(S)$ in the sequel; all  trajectories  will leave $S$ once and remain in $\operatorname{ext}(S)$ forever starting in $S\setminus I$.
The inner product $(\bold{V},  \nabla \gamma_i)$ can admit both positive or negative signs on~$\Gamma_i$.
Indeed according to  Lemma~\ref{function_F}
we know that the equation $F(\nu)=a$ admits exactly two
distinct real roots $\nu_1=\nu_1(a)$ and $\nu_2=\nu_2(a)$
given by \eqref{roots_nu}.
 As a consequence  $F(\nu)>a$  for $\nu \in (1,\nu_1)\cup (\nu_2,+\infty)$ and
$F(\nu)<a$ for $\nu\in (\nu_1,\nu_2)$
and, hence, $(\bold{V},  \nabla \gamma_i)< 0$ if $\nu\in (1,\nu_1)\cup (\nu_2,+\infty)$
and $(\bold{V},  \nabla \gamma_i)> 0$ if $\nu\in (\nu_1,\nu_2)$.
Knowing $\nu_1(a)$ and $\nu_2(a)$
provides  an opportunity to evaluate  values  $\nu_1^{\ast}:=\nu_1(a^{\ast})$ and
$\nu_2^{\ast}:=\nu_2(a^{\ast})$ at a fixed $a=a^{\ast}\in (3/14, 1/4)$.
Clearly $a-F(\nu)=0$ and hence $(\bold{V},  \nabla \gamma_i)=0$
along each straight line~\eqref{numu}
on the cone $\Gamma_i$, if  $\nu=\nu_1^{\ast}$ or $\nu=\nu_2^{\ast}$.
Therefore some trajectories of the system \eqref{three_equat} are coming into the domain $S$ from $\overline{\operatorname{ext}(S)}$
across the sector
$
\left\{x_i=\nu t, x_j=\mu t, x_k=t ~ | ~ t>0, \, \nu \in \big(\nu_1^{\ast},\nu_2^{\ast}\big) \right\}
$
of each conic surface $\Gamma_i$, $i=1,2,3$.
In sequel all these \glqq guest`` trajectories and trajectories started in $S\setminus I$
must leave $S$ according to Proposition~1 in~\cite{AN}.
Now we know that this happens through the sector
$
\left\{x_i=\nu t, x_j=\mu t, x_k=t ~ | ~ t>0, \, \nu \in \big(1, \nu_1^{\ast}\big) \cup \big(\nu_2^{\ast}, +\infty\big) \right\}
$
of each $\Gamma_i$, where $i=1,2,3$.

As for trajectories started in $I\cap S$ they
can not leave $I\cap S$ by Lemma~\ref{sing_points_out_S}.

\smallskip

$(4)$ {\it The case $a\in (1/4, 1/2)$}.
According to Proposition~1 in~\cite{AN}
trajectories of~\eqref{three_equat} originated in $\operatorname{ext}(S)\setminus I$ gets into $S$
in finite time in the case $\frac{1-2a}{4a}<\frac{1}{2}$.
We have also
trajectories starting in $I\cap \operatorname{ext}(S)$.
They reach~$S$ along $I\cap \operatorname{ext}(S)$
attracting by  the stable manifold of the singular point $(1,1,1)$
and crossing $\partial(S)$ according to  Lemma~\ref{I_i_curves}.

It is obvious that $(\bold{V},  \nabla \gamma_i)>0$ on   $\Gamma_i$
since  $a-F(\nu)> 1/4-F(\nu)>0$ for $\nu>1$ by Lemma~\ref{function_F}.
This means the associated vector field $V=(f,g,h)$
is directed inside the domain~$S$ on each $\Gamma_i$
and hence trajectories never can leave $S$ visiting it once or starting in it.

\smallskip

$(5)$ {\it  The case $a=1/4$}.
Clearly
$a-F(\nu)= 1/4-F(\nu)>0$ for all $\nu>1$ at $a=1/4$.
Therefore all trajectories will stay in  $S$ if they originated in $\overline{S}$.
But now Proposition~1 is useless to predict the behavior of trajectories originated in $\operatorname{ext}(S)$ since
neither $\frac{1-2a}{4a}<\frac{1}{2}$  nor $\frac{1-2a}{4a}>\frac{1}{2}$
can be satisfied by~$a=1/4$.
The value $a=1/4$  is also a bifurcation value.
Four non degenerate singular points~\eqref{ququ}
of system~\eqref{three_equat} merge to the unique  degenerate singular point $(1,1,1)$.
We will use results of~\cite{AANS1} where a planar analysis
was given in the $(x_1,x_2)$-coordinate plane
(note that phase portraits in the coordinate planes $(x_2, x_3)$ and $(x_1,x_3)$  will be the same due to symmetry in~\eqref{three_equat}).
As it was proved in Theorem~2 in~\cite{AANS1}
the system~\eqref{rnrf_sc} admits
the unique  singular point~$(1,1)$ at $a=1/4$
being a linear zero type saddle
with $6$ hyperbolic sectors in its neighborhood
according to  a classification given in~\cite{JiangLlibre}.
These sectors are separated by unstable (repelling) manifolds $c_i'$
and stable (attracting) manifolds $c_i\setminus c_i'$ of $(1,1)$
(the curves $c_i$ and $c_i'$ was introduced in Corollary~\ref{c_i_curves}),
see Figure~\ref{Gammas4}.
Due to these facts  there exist trajectories of~\eqref{rnrf_sc}
that will tend to  $(1,1)\in D$ and hence could reach  $D$
if they started  at least in the stable manifolds $c_i\setminus c_i'$
or in points sufficiently close to them.
This  establishes that some trajectories of  \eqref{three_equat}
originated in $\operatorname{ext}(S)$
can intersect  $\Gamma_2\subset \partial(S)$ and get into~$S$.
Theorem \ref{theo1} is proved.
\end{proof}

\begin{figure}[h]
\centering
\includegraphics[angle=0, width=0.45\textwidth]{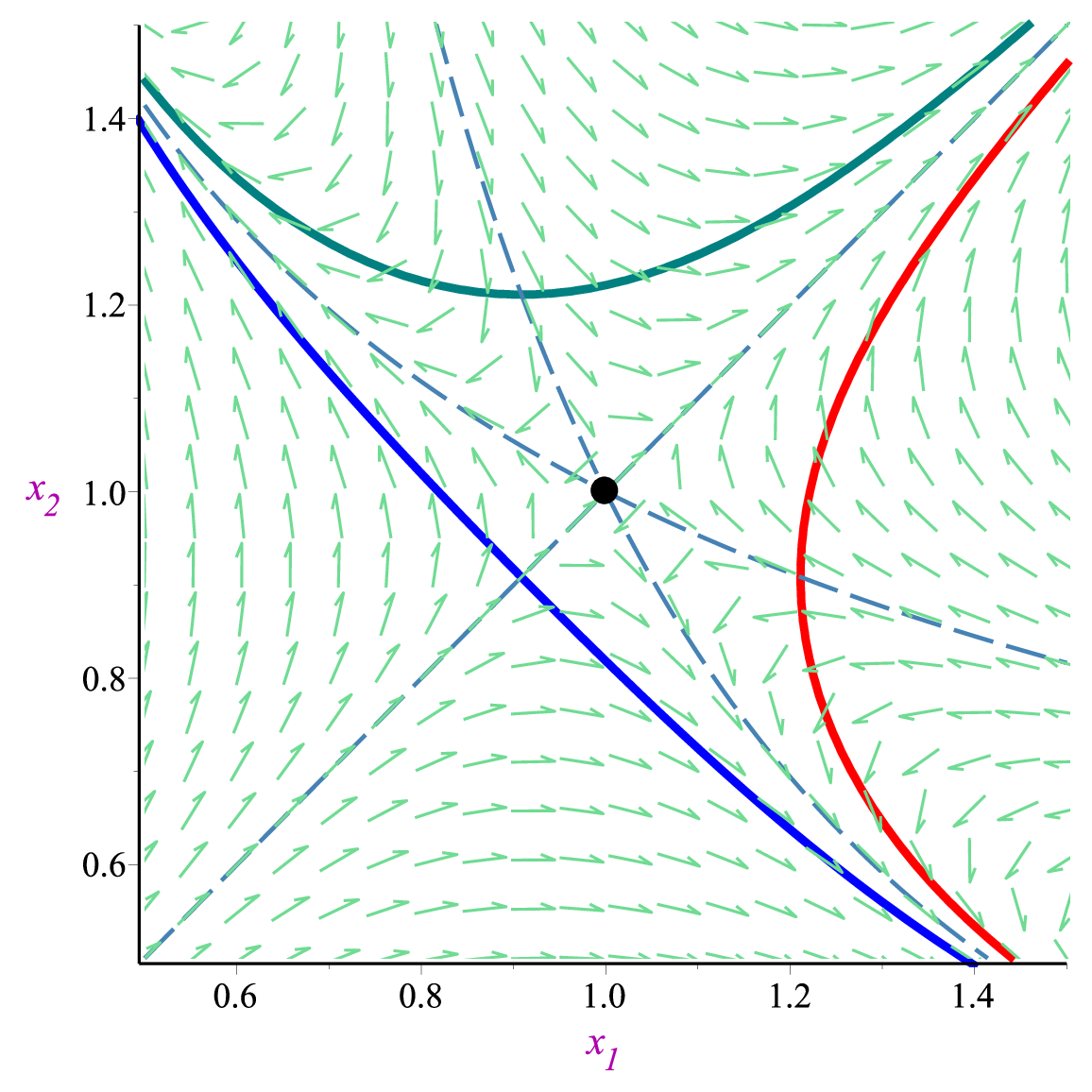}
\caption{The curves $s_1,s_2$ and $s_3$ (in red, teal and blue colors), the invariant curves $c_1,c_2$ and $c_3$ (dash lines) and the phase portrait  of system~\eqref{rnrf_sc} corresponding to $a=1/4$.}
\label{Gammas4}
\end{figure}

\section{Planar illustration of results and some supplementary discussions}

In Remark~\ref{refolds} we noted that surfaces $x_1x_2x_3=c$
are most significant for study the dynamics of the system~\eqref{three_equat}
because reflect leading motions of trajectories at a fixed $c>0$.
Actually studying could be reduced to the surface  $x_1x_2x_3=1$
since $f_i$ are all homogeneous and~\eqref{three_equat} is autonomous.
Replacing  $x_i=X_i\sqrt[3]{c}$, $t=\tau\sqrt[3]{c}$,
we get a new system in the new variables but of the same form as the initial~\eqref{three_equat}.
Since $c>0$ such a passing to the new variables does not change the orientation of trajectories,
and stability (or nonstability) of manifolds of singular points will be preserved as well.
Thus let us  consider the invariant surface  $x_1x_2x_3=1$ without loss of generality.
The corresponding planar system obtained on it and projected to the plane $x_3=0$ is
the following:
\begin{equation}\label{rnrf_sc}
\aligned
\dot{x}_1(t)&= &\widetilde{f}(x_1,x_2):=x_1x_2^{-1}+x_1^2x_2 - 2ax_1\left(2x_1^2-x_2^2-x_1^{-2}x_2^{-2}\right)-2,                \\
\dot{x}_2(t)&= &\widetilde{g}(x_1,x_2):=x_2x_1^{-1}+x_1x_2^2- 2ax_2\left(2x_2^2-x_1^2-x_1^{-2}x_2^{-2}\right)-2,
\endaligned
\end{equation}
where\,
$\widetilde f(x_1,x_2)\equiv  f_1\big(x_1,x_2, (x_1x_2)^{-1}\big)$, ~
$\widetilde g(x_1,x_2)\equiv  f_2\big(x_1,x_2, (x_1x_2)^{-1}\big)$.

Let us consider  \eqref{rnrf_sc} for
various values of the parameter $a\in (0,1/2)$ in the $(x_1,x_2)$-coordinate plane
to  give a more visible illustration of  Theorem \ref{theo1}.
Recall some well-known concepts from
the theory of dynamical systems (see~\cite{Dumort} for example). The point $(x_0,y_0)$ is said to be a singular (equilibrium) point of system~\eqref{rnrf_sc}
if the following equalities hold: $\widetilde{f}(x_0,y_0)=0$ and $\widetilde{g}(x_0,y_0)=0$.
Let $\widetilde{\bold{J}}:=\widetilde{\bold{J}}(x_0,y_0)$ be a matrix of linear parts of~\eqref{rnrf_sc}.
Introduce values
$\delta:=\operatorname{det}\big(\widetilde{\bold{J}}\big)$, \, $\rho:=\operatorname{trace}\big(\widetilde{\bold{J}}\big)$, \,
$\sigma:=\rho^2-4\delta$.

It follows  from~\eqref{ququ} in the case $a\in (0,1/2)\setminus \{1/4\}$
that~\eqref{rnrf_sc} admits only  four
singular points  $(1,1)$, $(q\kappa, q)$,
$(q,q\kappa)$ and $(q,q)$
being  orthogonal projections of~\eqref{ququ}
onto the plane $x_3=0$.
We can easily evaluate that
$$
\delta=(4a-1)^2>0, \quad \rho=-2(4a-1), \quad \sigma=\rho^2-4\delta=0
$$
for $(1,1)$ and
$$
\delta=\frac{(2a+1)(4a-1)^2}{(2a-1)q^2}<0
$$
for $(q\kappa, q)$,
$(q,q\kappa)$ and $(q,q)$.
Thus $(1,1)$ is a hyperbolic node and
$(q\kappa, q)$,
$(q,q\kappa)$, $(q,q)$ are hyperbolic saddles of~\eqref{rnrf_sc}
according to~\cite{Dumort}.
Note also that the node $(1,1)$  is unstable (repelling) if $a\in(0,1/4)$
and stable (attracting) if $a\in (1/4, 1/2)$
due to the signs of $\rho$.
Since each singular point of~\eqref{rnrf_sc}
are hyperbolic at $a\in (0,1/2)\setminus \{1/4\}$
then according to Hartman-Grobman theorem
nonlinear system~\eqref{rnrf_sc}
has the phase portrait topologically  equivalent to the phase portrait
of the corresponding linearized system  in a sufficiently small neighborhood
of that point.
The case $a=1/4$ leading to $\delta=\rho=\sigma=0$ is much more complicated.
An appropriate material can be found in  \cite{Ab1, AANS1, AANS2} and references therein.

\smallskip

{\it The case $a\in (0,3/14)$}.
The behavior of trajectories of~\eqref{rnrf_sc} can  be explained
by the influence of attracting and repelling  manifolds of
the singular points of~\eqref{rnrf_sc}.
Actually the unstable node  $(1,1)\in D$ repels trajectories
in $D$ whereas the saddles $(q\kappa, q)$,
$(q,q\kappa)$ and $(q,q)$  attract  them
lying outside $D$ by Lemma~\ref{sing_points_out_S}
(the saddles are depicted in the left panel of Figure~\ref{Gammas2}
by points in red, teal and blue colors respectively).
This explains why trajectories are leaving the domain~$D$.
The repelling manifold of   $(1,1)$
is tangent to the repelling manifold
$\operatorname{Span}\left\{(1,0),(0,1)\right\}$
of the linearized system of~\eqref{rnrf_sc}
which corresponds to  eigenvalues $\lambda_1=\lambda_2=1-4a>0$ 
of the matrix~$\widetilde{\bold{J}}(1,1)$.
As for saddles it suffices describe manifolds of $(q,q)$ only.
It has attracting
and repelling manifolds tangent to  $\operatorname{Span}\left\{(1,1)\right\}$
and $\operatorname{Span}\left\{(-1,1)\right\}$ respectively which correspond
to eigenvalues $\lambda_1=\frac{(1-4a)(2a-1)}{2a}<0$ and  $\lambda_2=\frac{(1-4a)(2a+1)}{2a}>0$
of the matrix $\widetilde{\bold{J}}(q,q)$.

\begin{figure}[h]
\centering
\includegraphics[angle=0, width=0.45\textwidth]{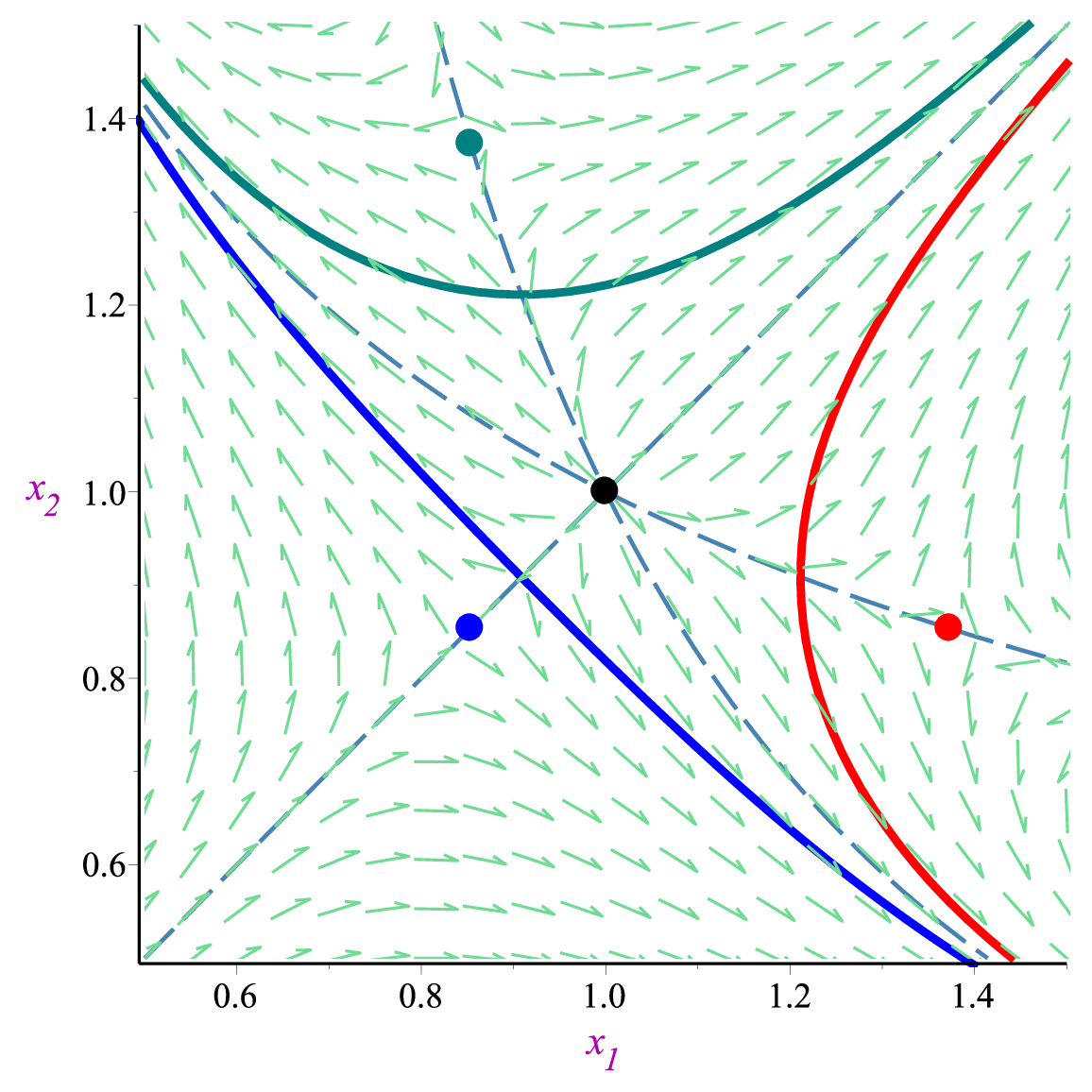}
\includegraphics[angle=0, width=0.45\textwidth]{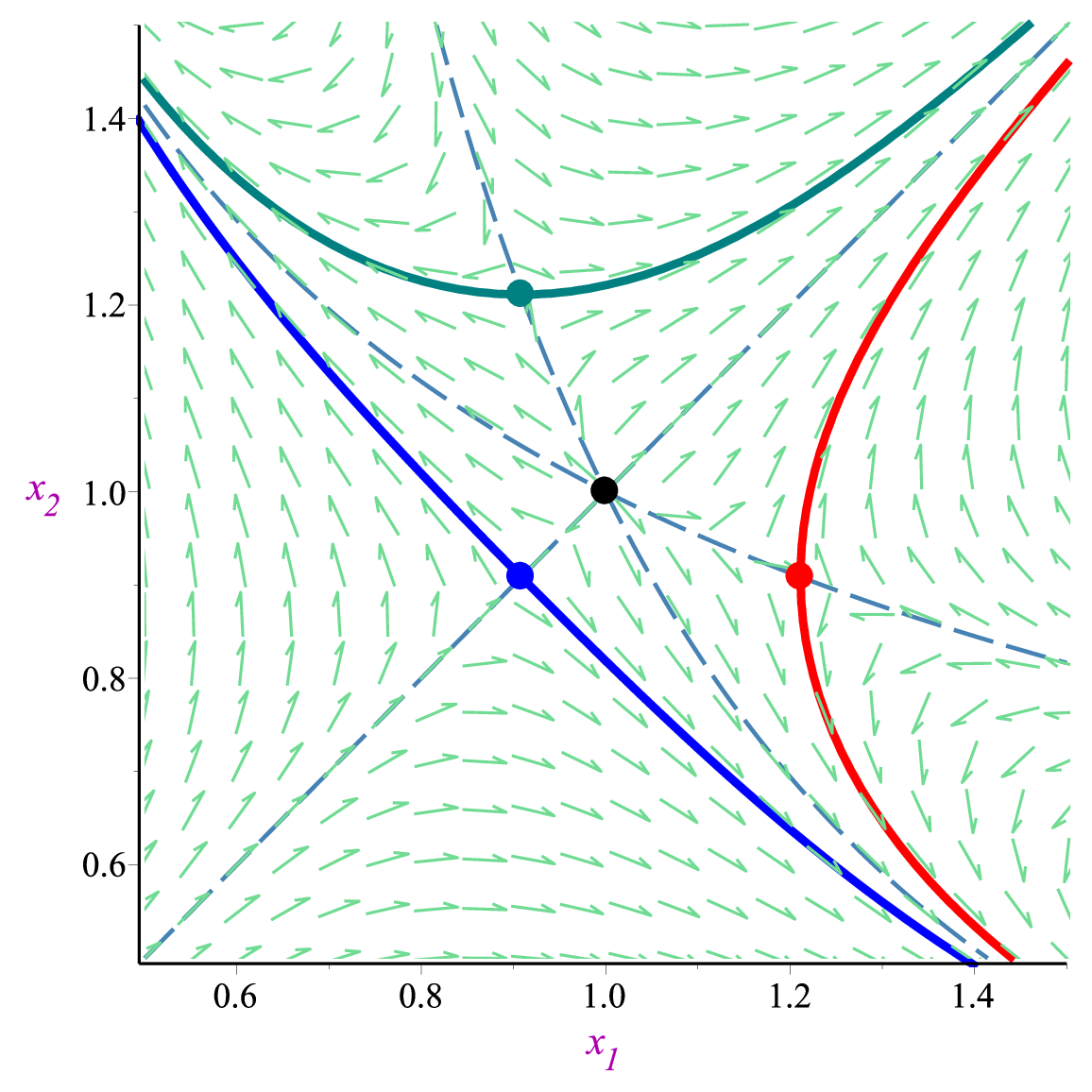}
\caption{Phase portrait  of system~\eqref{rnrf_sc}
at $a\in (0,3/14)$ (the left panel) and $a=3/14$ (the right panel).}
\label{Gammas2}
\end{figure}

\smallskip

{\it The case $a=3/14$}.
All singular points of system~\eqref{rnrf_sc}
preserve their stable and unstable manifolds found in the previous case~$a\in (0,3/14)$
and hence the topological structure of phase portrait will be the same,
but now saddles are all located in the border  $\partial(D)$ of the domain $D$
due to Lemma~\ref{sing_points_out_S}.
This circumstance can lead to a \glqq weaker`` influence of saddles
and as a consequence trajectories may get $D$ across $\partial (D)$
being  almost parallel to $\partial (D)$.
Consider, for example, $s_1\subset \partial (D)$
(see the red curve on the right  panel of Figure~\ref{Gammas2}).
The curve $s_1$ can  be parameterized  as the following
\begin{eqnarray}\label{param}
x_1(t) =t\, \left(\frac{t-1+2\sqrt{t(t-1)}}{(3t+1)(t-1)t}\right)^{\frac{1}{3}}, \quad
x_2(t) =\left(\frac{t-1+2\sqrt{t(t-1)}}{(3t+1)(t-1)t}\right)^{\frac{1}{3}}, \quad t>1,
\end{eqnarray}
substituting $x_1=tx_2$,  $x_2=\sqrt[3]{u}$ into $l_1=0$ in \eqref{L_i}
and solving it with respect to $u$.
Note that
$$
\lim_{t\to 1+0}x_1(t)=\lim_{t\to 1+0}x_2(t)=+\infty, \quad
\lim_{t\to +\infty}x_1(t)=+\infty, \quad \lim_{t\to +\infty}x_2(t)=0.
$$

The following asymptotic representations can be useful
for quick  estimations:
$$
x_1(t)\sim x_2(t) \sim 2^{-1/3}(t-1)^{-1/6}  \mbox{~~as~~} t\to 1+0 \mbox{~~and~~}
 x_1(t) \sim t^{1/3}, ~ x_2(t) \sim t^{-2/3} \mbox{~~as~~} t\to +\infty.
$$

On the curve $s_1$ introduce the function
$$
\Delta(t):=\dfrac{\dot{x_2}(t)}{\dot{x_1}(t)}-\dfrac{\widetilde{g}(x_1(t),x_2(t))}{\widetilde{f}(x_1(t),x_2(t))}
$$
that means the difference between
the slope of the tangent vector
$\bold {\tau}(t)=\big(\dot{x_1}(t),\dot{x_2}(t)\big)$ of $s_1$ and the slope of the vector field
$
\widetilde{\bold{V}}(t):=\Big(\widetilde{f}\big(x_1(t),x_2(t)\big),\widetilde{g}\big(x_1(t),x_2(t)\big)\Big)
$
associated with system~\eqref{rnrf_sc},
where $x_1(t)$ and $x_2(t)$ are determined as in \eqref{param}.
As calculations show $\Delta(t)>0$ for $1<t<4/3$ and
$\Delta(t)<0$ for $t>4/3$.
At $t=4/3$ the function $\Delta(t)$ is not defined since we have
the singular point $(q\kappa,q)$
exactly on $s_1$ (the red point on~$s_1$), and hence
$\dfrac{\widetilde{g}}{\widetilde{f}}=\dfrac{0}{0}$.
Thus $\operatorname{slope}(\widetilde{\bold{V}})<\operatorname{slope}(\bold{\tau})$
at $1<t<4/3$ and $\operatorname{slope}(\widetilde{\bold{V}})>\operatorname{slope}(\bold{\tau})$
at $t>4/3$  confirming assertions of Theorem \ref{theo1}.
But our  interest is not only the sign of the difference
of slopes but its quantitative estimation too.
Consider the angle
$
\alpha(t):=\arccos\, \dfrac{\widetilde{V}_1\tau_1+\widetilde{V}_2\tau_2}{\|\widetilde{V}\|\,\|\tau\|}
$
between $\widetilde{V}$ and $\tau$.
As calculations show
$$
3.1125\le \alpha(t)<\pi
$$
for $1<t<4/3$ (see the left panel of Figure~\ref{ugol}).
In addition
$
\lim_{t\to 1+0}\alpha(t)=\lim_{t\to 4/3-0}\alpha(t)=\pi
$.
This means that trajectories are almost parallel to the tangent of $s_1$
and intersect~$s_1$ under angles in a narrow range $[178.33^{\circ}, 180^{\circ})$ given in degrees.
For $t>1$ we have
$$
0<\alpha(t)\le 0.211  ~~~ (\mbox{in degrees~~} 0^{\circ}<\alpha\le  1.21^{\circ})
$$
 with
$\lim_{t\to 4/3+0}\alpha(t)=\lim_{t\to +\infty}\alpha(t)=0$
(see the right panel of Figure~\ref{ugol}).
In any case  at every point of the curve $s_1$
trajectories are \glqq almost parallel`` to $s_1$ (to its tangent),
nevertheless they intersect~$s_1$ under small angles  not exceeding
$2^{\circ}$.

\begin{figure}[h]
\centering
\includegraphics[angle=0, width=0.45\textwidth]{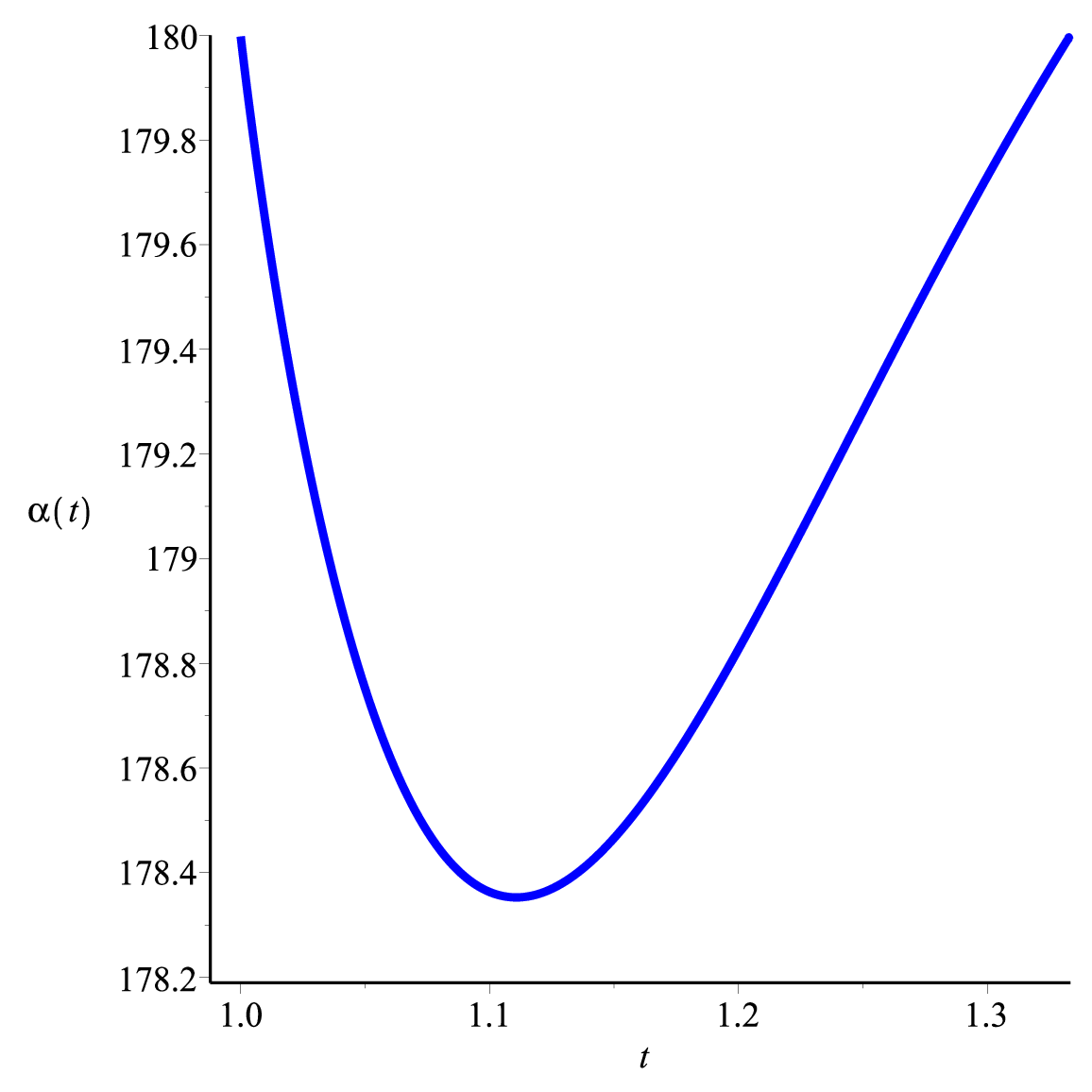}
\includegraphics[angle=0, width=0.45\textwidth]{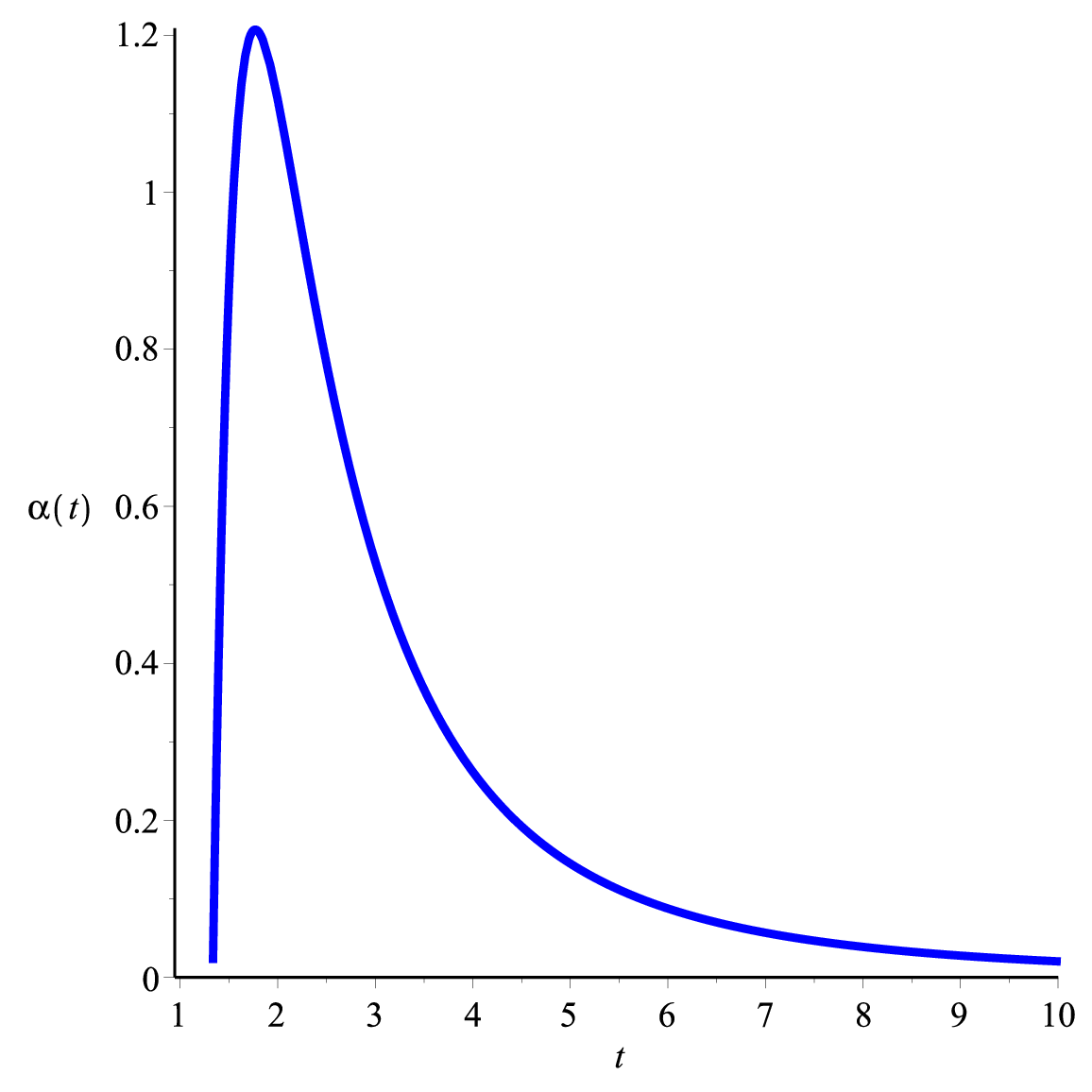}
\caption{Angle $\alpha(t)$ measured in degrees in the case $a=3/14$: for $1<t<4/3$ (the left panel);
for $t>4/3$ (the right panel).}
\label{ugol}
\end{figure}

\smallskip

{\it The case $a\in (3/14, 1/4)$}.
A planar illustration is shown in the left panel of Figure~\ref{Gammas3}
corresponding to the value $a=a^{\ast}=\frac{3}{14}+0.006$.
By Theorem~1 there exist exactly two points, say $P'$ and $Q'$, on~$\Sigma \cap \Gamma_i$
at which $(\bold{V},  \nabla \gamma_i)=0$.
Coordinates of these points can be found  by~\eqref{numu}
at corresponding values of $\nu_1$ and $\nu_2$ defined by~\eqref{roots_nu}.
Take $i=2$ for clarity.
Then on the plane $x_3=0$ trajectories of \eqref{rnrf_sc}
can be  tangent to the boundary $s_2$ of $D$ exactly at two points
being the projections of $P'$ and $Q'$ onto $x_3=0$,
denote them by $P$ and $Q$.

It follows from~\eqref{roots_nu} that
$\nu_1^{\ast}=\nu_1(a^{\ast})\approx 1.0793$,
$\mu_1^{\ast}=\mu(\nu_1^{\ast})\approx 0.5059$.
The fixed value
of the parameter~$t$ in~\eqref{numu} will be actualized
as $t_1^{\ast}=\sqrt[3]{(\nu_1^{\ast}\mu_1^{\ast})^{-1}}\approx 1.2234$.
This implies that the coordinates of the point~$P$ are
$$
x_1^{P}=t_1^{\ast}\approx 1.2234, \quad x_2^{P}=\nu_1^{\ast} t_1^{\ast}\approx 1.3204.
$$

By the same way we can find
coordinates $x_2^{\ast}$ and $y_2^{\ast}$ of $Q$.
We easily get
$\nu_2^{\ast}=\nu_2(a^{\ast})\approx 2.1332$,
$\mu_2^{\ast}=\mu(\nu_2^{\ast})\approx 1.9764$, $t_2^{\ast}=\sqrt[3]{(\nu_2^{\ast}\mu_2^{\ast})^{-1}}\approx 0.6190$.
Then
$$
x_1^{Q}=t_2^{\ast}\approx 0.6190, \quad
x_2^{Q}=\nu_2^{\ast} t_2^{\ast}\approx 1.3204.
$$

Denote now by $\operatorname{arc}(PQ)$ a part of the curve $s_2 \subset \partial (D)$
with endpoints $P$ and $Q$.
Then trajectories reach  $D$
through   $\operatorname{arc}(PQ)$ and
leave $D$ through  $s_2\setminus \operatorname{arc}(PQ)$.

\begin{figure}[h]
\centering
\includegraphics[angle=0, width=0.45\textwidth]{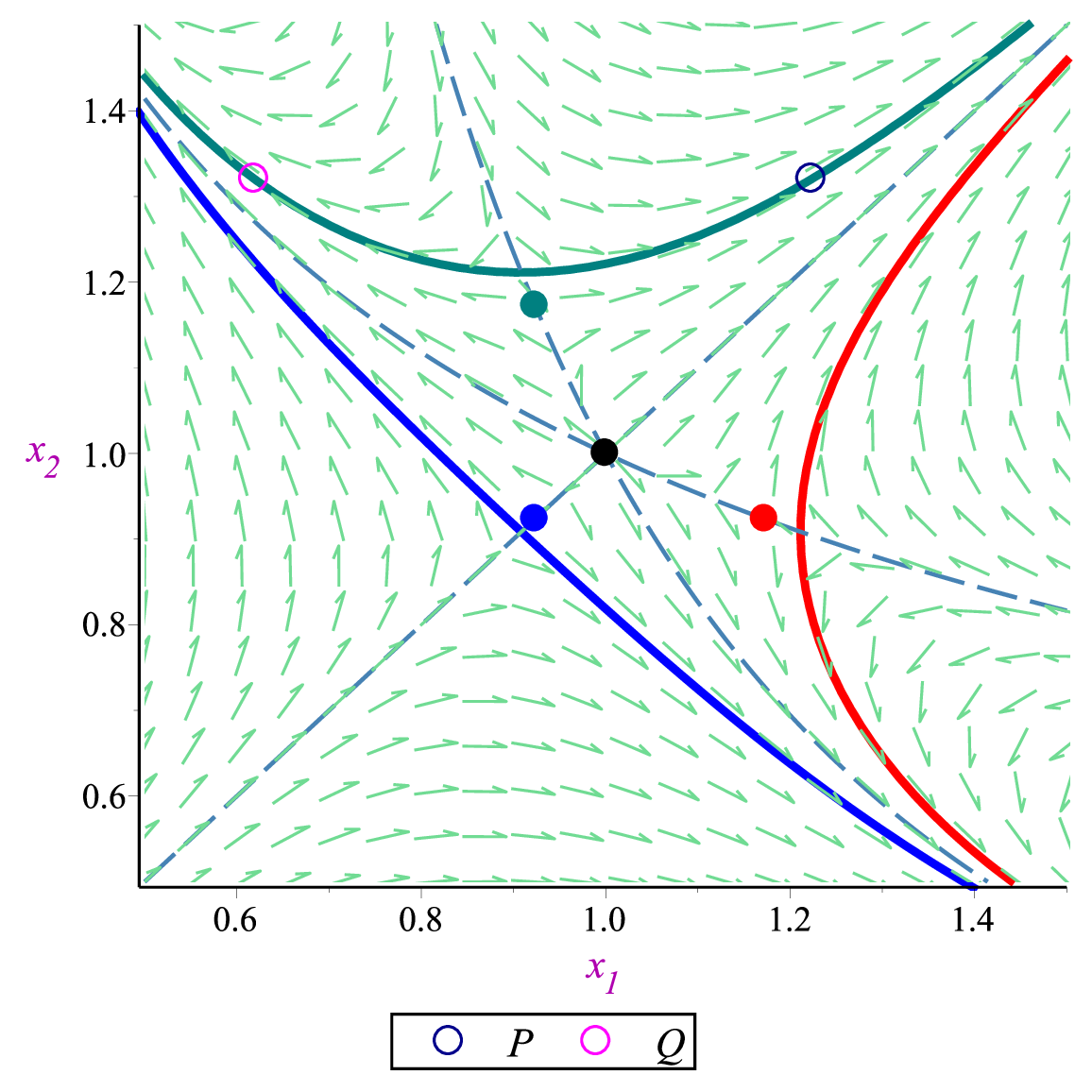}
\includegraphics[angle=0, width=0.45\textwidth]{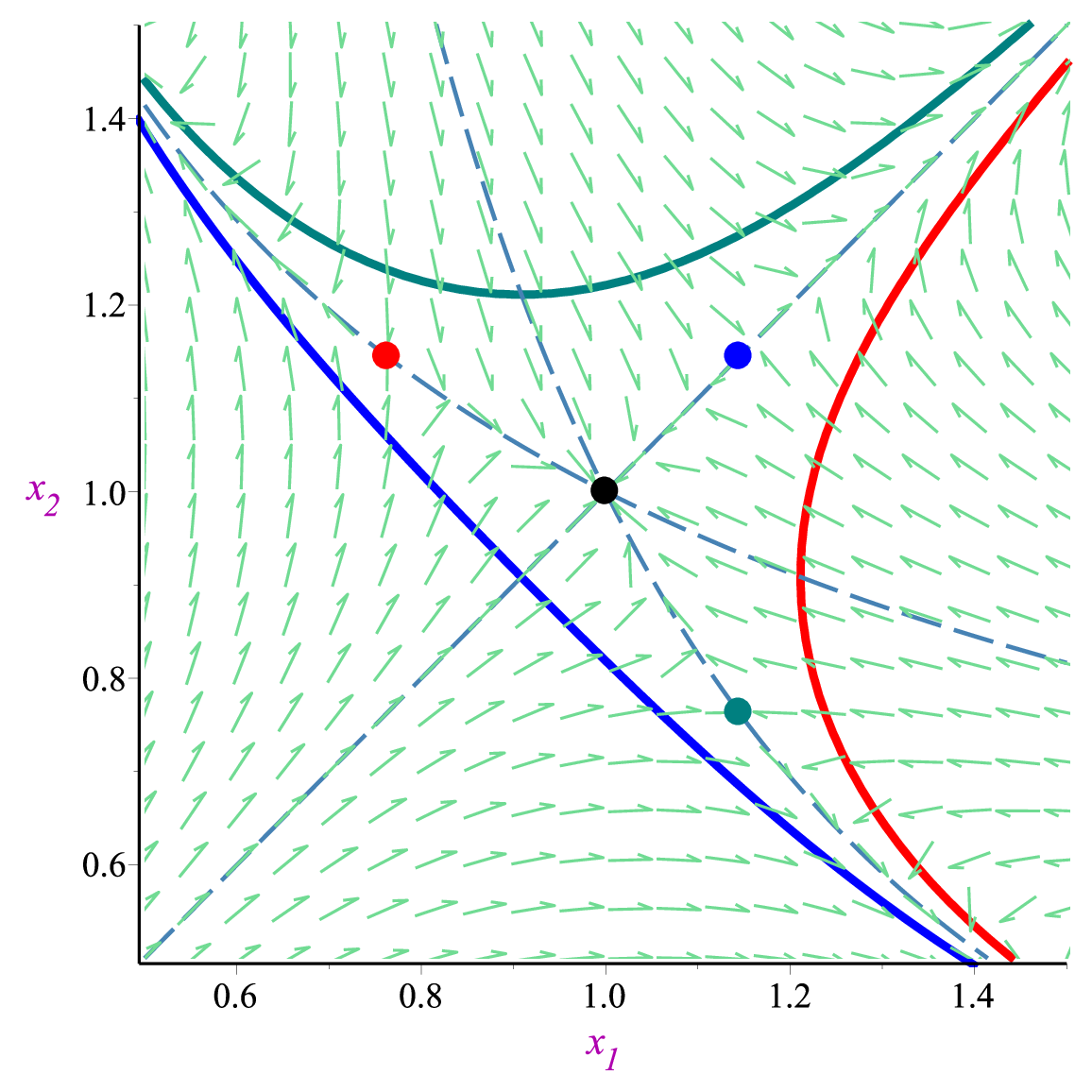}
\caption{Phase portrait  of system~\eqref{rnrf_sc}
at $a\in (3/14, 1/4)$ (the left panel) and $a\in (1/4,1/2)$ (the right panel).}
\label{Gammas3}
\end{figure}

\smallskip

{\it The case $a=1/4$}.
Introduce new variables $x=x_3/x_1$ and $y=x_3/x_2$.
Consider the connected component $\Omega$ of $D$ defined by the conditions
$0<x_2<x_1$ and $x_2<x_1^{-2}$.
Its  homeomorphic image~$\Omega'$ will be described by $y>x>1$
in the new coordinate plane $(x,y)$.
Observe that
the equation $l_3(x_1,x_2)=0$ of the boundary curve $s_3$ admits an explicit solution
$$
y_{b}=x\left(x-1+2\sqrt{x(x-1)}\right)(3x+1)^{-1}(x-1)^{-1}
$$
in new variables.
The part  $s_3\cap\Omega$ of  $s_3$ satisfying $x_2\to 0$ as $x_1\to +\infty$
is mapped onto a new curve~$s_3'$ in $\Omega'$ satisfying $y\to +\infty$ as $x\to 1+0$.
Moreover, the following asymptotic formula holds
$$
y_b\sim  2^{-1}\sqrt{(x-1)^{-1}} \mbox{~~as~~} x\to 1+0
$$
for points on $s_3'$.
The system \eqref{rnrf_sc} can be reduced to a new differential equation  $dy/dx=\widehat{g}(x,y)/\widehat{f}(x,y)$ in $\Omega'$.
Using  ideas of Lemma~2 in~\cite{AN} we obtain
$$
\frac{\widehat{g}}{\widehat{f}}\sim \frac{2a-1}{4a}\frac{y}{x-1}=-\frac{y}{2(x-1)}
\mbox{~~as~~} x\to 1+0, ~ y\to +\infty.
$$
For solutions $y=\phi(x,C)$ of $dy/dx=\widehat{g}(x,y)/\widehat{f}(x,y)$ it follows then
$$
y\sim  \sqrt{C(x-1)^{-1}} \mbox{~~as~~} x\to 1+0,
$$
where $C$ is any positive constant.
Hence for $x$ infinitely close to $1$  tangents of integral curves $y=\phi(x,C)$  could be parallel to corresponding
tangents of the border curve $s_3'$. Therefore some trajectories need not cross~$s_3'$.
This explains  why Proposition~1 of~\cite{AN} fails at $a=1/4$ and hence
not every trajectory could intersect~$\partial(S)$ as stated in Theorem \ref{theo1}.
For solutions of~$dx_2/dx_1=\widetilde{g}(x_1,x_2)/\widetilde{f}(x_1,x_2)$  considered in~$\Omega$
 we obtain
$$
x_1= \dfrac{1}{\sqrt{x_2}}-\dfrac{Cx_2^{5/2}}{2}+O\left(x_2^{11/2}\right) \mbox{~~ as~~} x_2\to +0,
$$
passing to the old variables,
while the corresponding asymptotic of the curve $s_3$ is
$$
x_1 = \dfrac{1}{\sqrt{x_2}}-\dfrac{x_2^{5/2}}{8}+O\left(x_2^{11/2}\right) \mbox{~~ as~~} x_2\to +0.
$$
Note also that $x_1=\dfrac{1}{\sqrt{x_2}}$ is the invariant curve $c_3$ exactly.

\smallskip

{\it The case $a\in (1/4, 1/2)$}.
A typical situation that confirms assertions of Theorem~\ref{theo1}
is illustrated in the right  panel of Figure~\ref{Gammas3}.

\smallskip
It should be noted that the finite time guarantied  for trajectories to get $\partial(D)$ may be as long as we want if values $a$ are close to $1/4$
(see Figures~\ref{RK1} and \ref{RK2}).
Let us take $a=a_0:=0.26$ and pass to the variables $x=x_3/x_1$ and $y=x_3/x_2$.
Choose an initial point $O$ with coordinates $x_0=1+10^{-3}$, $y_0=20$
(see the left panel of Figure~\ref{RK1}).
The unique exact solution $y=\phi(x)$ of the initial value problem
\begin{equation}\label{IV_problem}
\frac{dy}{dx}=\frac{\widehat{g}(x,y)}{\widehat{f}(x,y)}, \quad y(x_0)=y_0
\end{equation}
can be found approximately  by numerical methods
subdividing a given interval $[b, x_0]\subset [1,x_0]$  into~$N$ equal subsegments
and selecting mesh points $x_i=x_0+ih$ for  $i=0,\dots, N$,
where  $h=(b-x_0)/N$ is a negative step size.
The Runge~-- Kutta method applied for $b=1+{10}^{-6}$ and  $N=5000$
shows that  the integral curve $y=\phi(x)$  initiated in $O(x_0,y_0)$
attains the boundary $s_3'\cap \Omega'$ of $D'\cap \Omega'$ at a point
with coordinates
$$
(x,y)\approx (1.000008792, 168.88),
$$
where
the first negative difference $y_r[j]-y_b(x_j)<0$
happens for the mesh point $x_j=1.000008792$,  $j=4961$ and
$y_r[i]$ means an approximative value to $y=\phi(x_i)$ obtained
by Runge~-- Kutta method, $i=1,\dots, N$.
Denote this point by~$R$ (see the left panel of Figure~\ref{RK2}).

In  cases $x\to 1+0$ we can also suggest an asymptotic representation
$y_a=C\,(x-1)^{\frac{1}{2}-\frac{1}{4a}}$
of~$y=\phi(x)$. Substituting $a=a_0$ into $y_a(x)$ and using the initial condition $y_a(x_0)=y_0$
we determine a positive parameter~$C$ and obtain an actualized asymptotic
$$
y_a=0.8249252769\,(x-1)^{-0.4615384615}
$$
to the exact solution $y=\phi(x)$.
As an intersection of the graphs of $y_a(x)$  and $y_b(x)$ we obtain
a point $A$  with coordinates (see the right panel of Figure~\ref{RK2})
$$
(x,y)\approx (1.000002264, 332.55).
$$
Thus numeric experiments also confirm that a long time can be required for trajectories of~\eqref{rnrf_sc}
to intersect~$\partial (D)$ if $a\to 1/4$.

\begin{figure}[h]
\centering
\includegraphics[angle=0, width=0.45\textwidth]{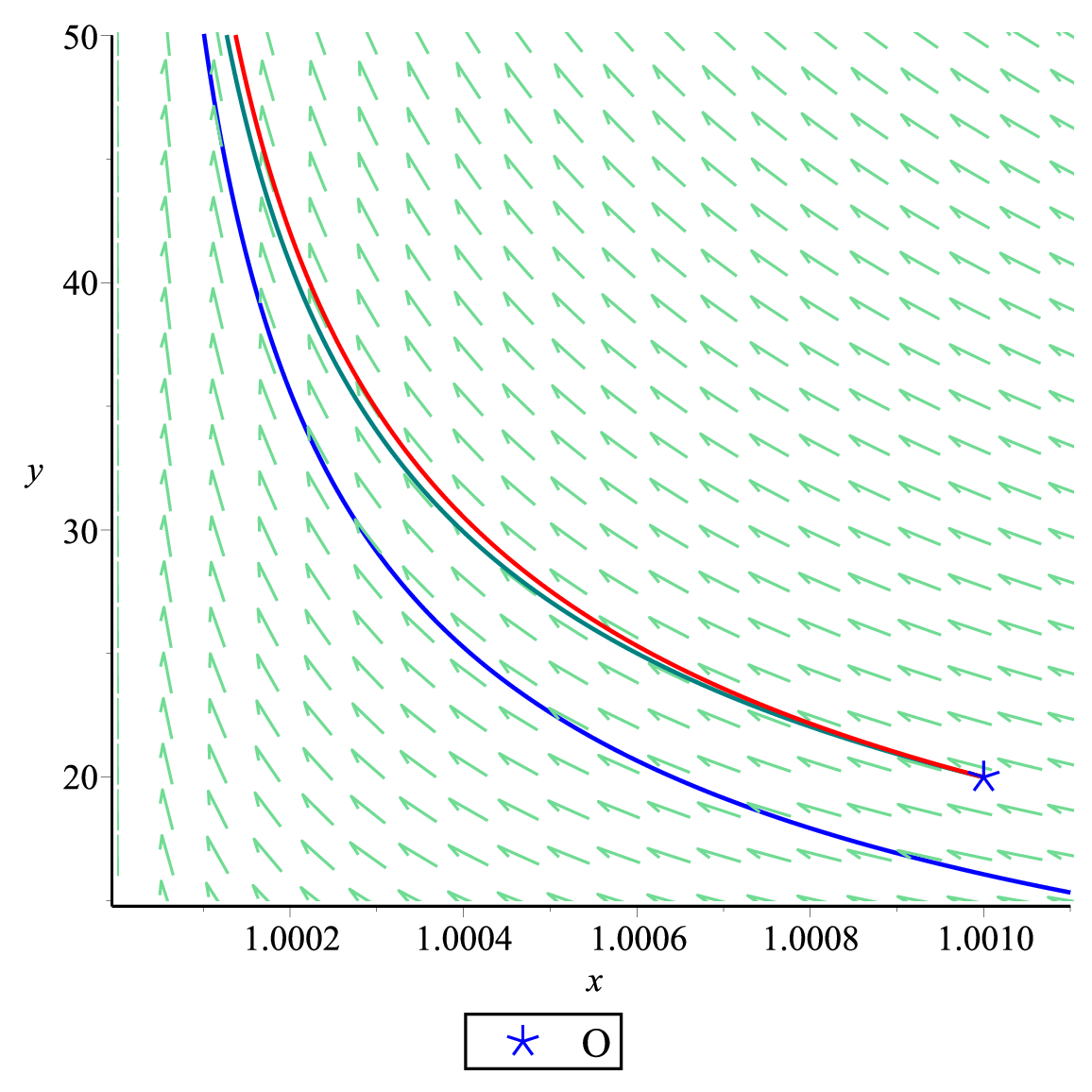}
\includegraphics[angle=0, width=0.45\textwidth]{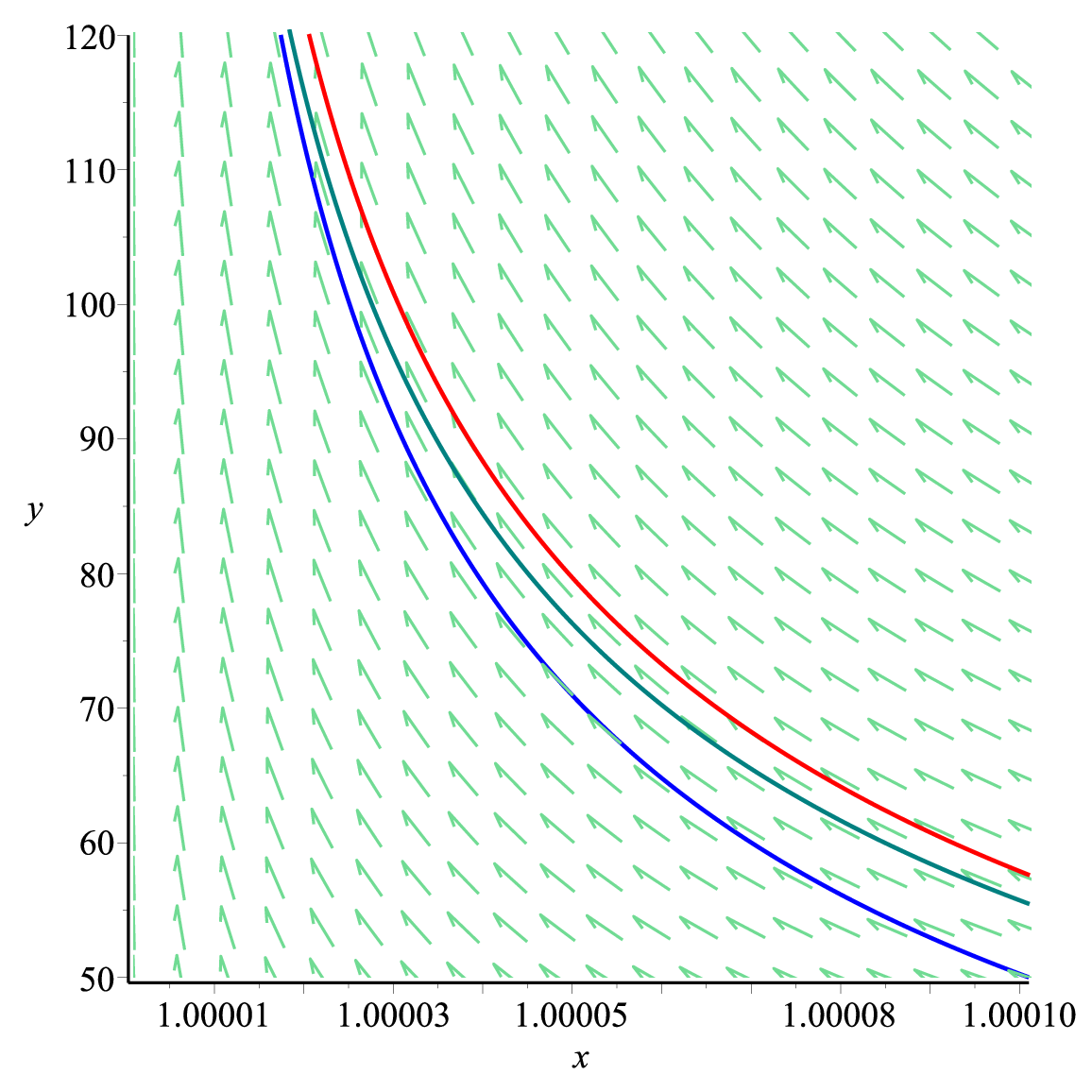}
\caption{The border curve $s_3'$ (in blue color), the initial point $O$, a Runge~-- Kutta approximation (in teal color) and an asymptotic approximation
to the solution of~\eqref{IV_problem} (in red color).}
\label{RK1}
\end{figure}

\begin{figure}[h]
\centering
\includegraphics[angle=0, width=0.45\textwidth]{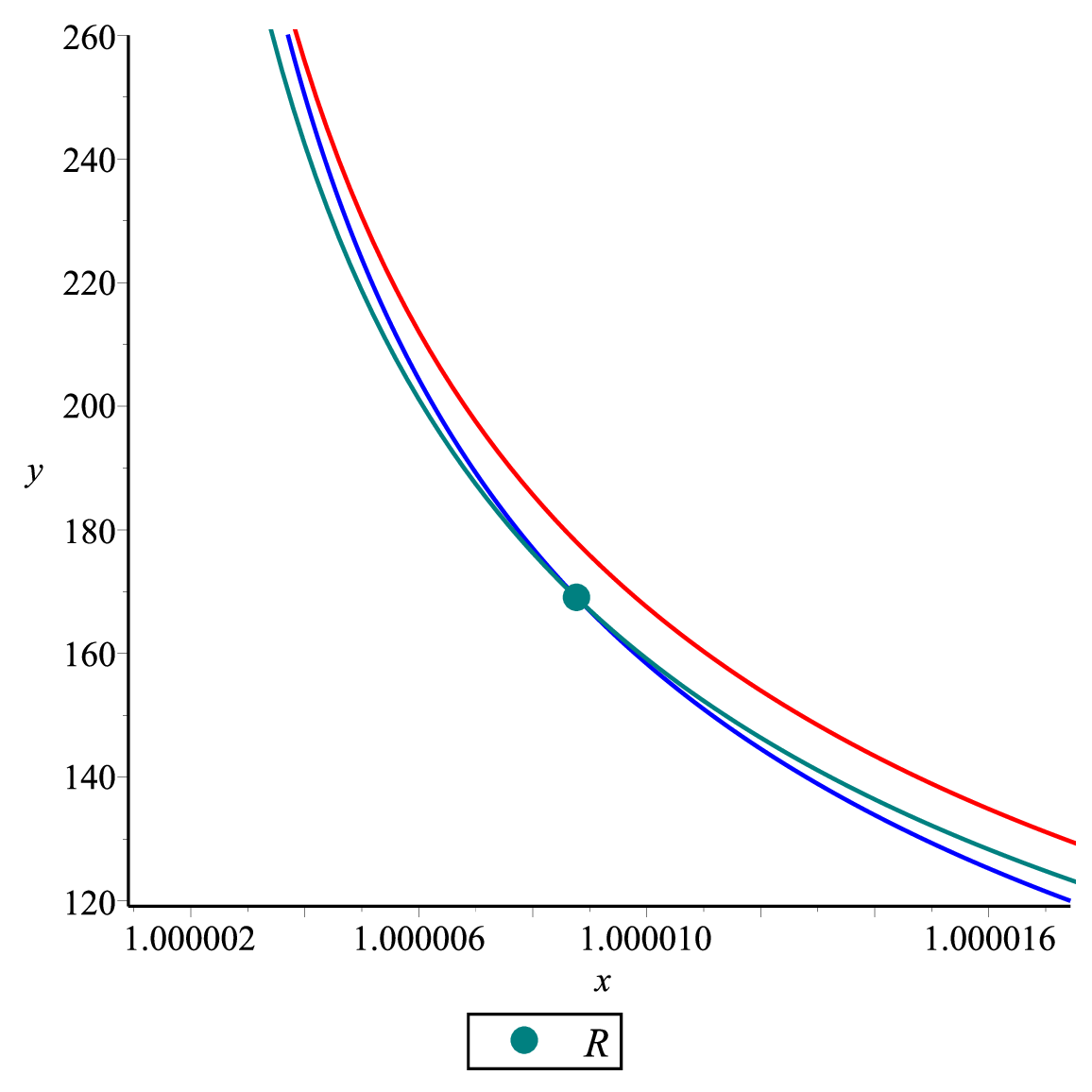}
\includegraphics[angle=0, width=0.45\textwidth]{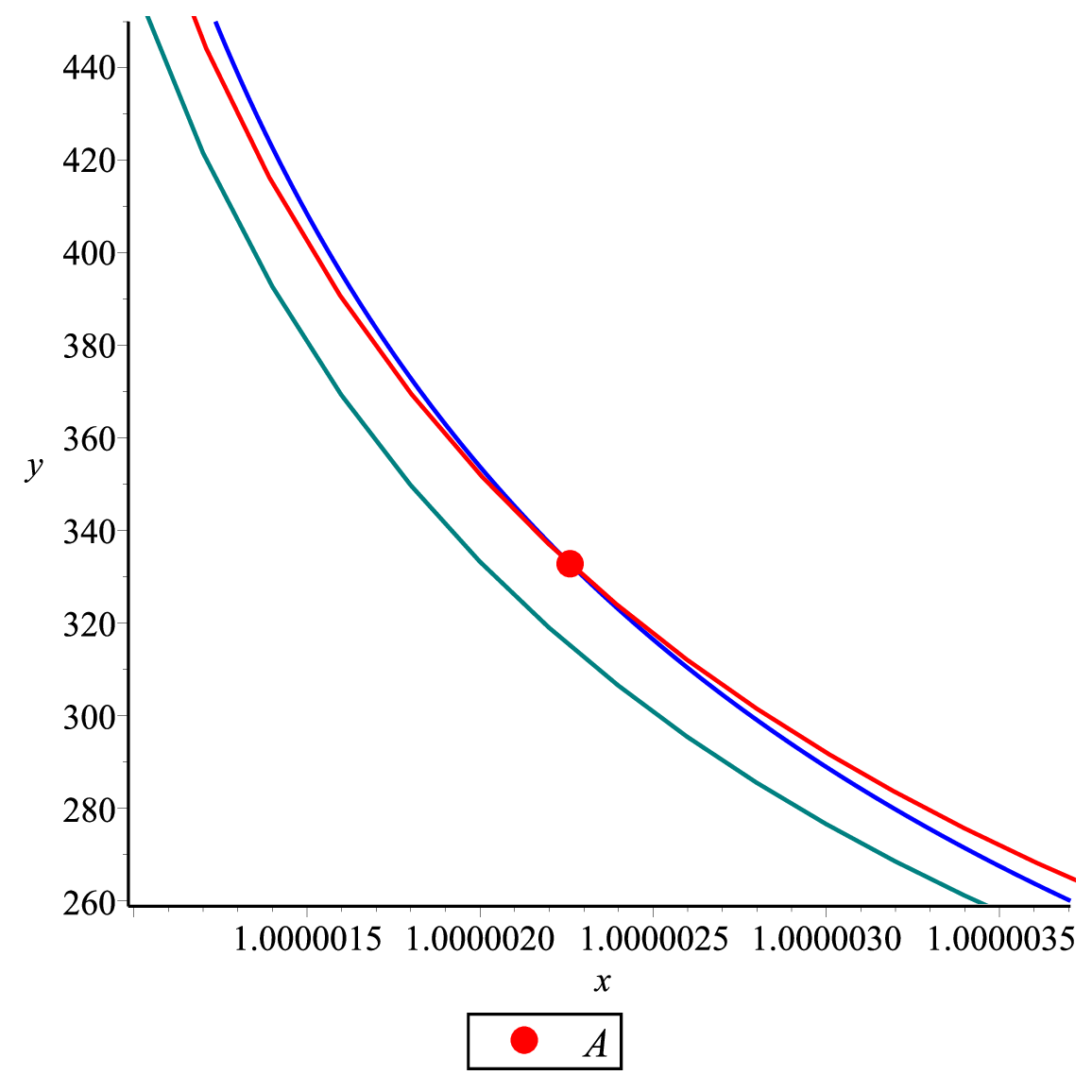}
\caption{The continuation of Figure~\eqref{RK1} to values $x\to 1+0$.}
\label{RK2}
\end{figure}

\bigskip

The author is indebted to Prof. Yu.\,G.~Nikonorov for helpful discussions of
the topic of this paper.

\vspace{10mm}

\bibliographystyle{amsunsrt}

\vspace{5mm}

\end{document}